\providecommand{\U}[1]{\protect\rule{.1in}{.1in}}
\theoremstyle{plain}
\newtheorem{corollary}{Corollary}
\newtheorem{lemma}{Lemma}
\newtheorem{proposition}{Proposition}
\newtheorem{theorem}{Theorem}
\numberwithin{equation}{section}
\newcommand{\lap}{\mbox{$\triangle$}}
\begin{document}
\title[Indefinite fractional elliptic problem and Liouville theorems]{Indefinite fractional elliptic problem and Liouville theorems}
\author{Wenxiong Chen and Jiuyi Zhu}
\address{Wenxiong Chen
\\Department of Mathematics\\
Yeshiva University\\
New York, NY 10033, USA\\
Emails: wchen@yu.edu}
\address{ Jiuyi Zhu  \\
Department of Mathematics\\
Johns Hopkins University\\
Baltimore, MD 21218, USA\\
Emails:  jzhu43@math.jhu.edu  }
\thanks{\noindent }
\date{}
\subjclass{35B53, 35B45, 35J70,} \keywords {fractional Laplacian,
Indefinite problem, Liouville theorem.} \dedicatory{ }

\begin{abstract}
In this paper, we consider the indefinite fractional elliptic
problem. A corresponding Liouville-type theorem for the indefinite
fractional elliptic equations is established. Furthermore, we obtain
a priori bound for solutions in a bounded domain by blowing-up and
re-scaling. We also classify the solutions of some degenerate
elliptic equation originated from fractional Laplacian.
\end{abstract}

\maketitle
\section{Introduction}

The paper is to devote to studying the fractional Laplacian with
indefinite nonlinearity:
\begin{equation}
\left \{
\begin{array}{rll}
(-\lap)^{\frac{\alpha}{2}}u=&a(x)g(u) \quad \quad &\mbox{in} \
\Omega,\medskip \\
u>&0 \quad \quad \quad &\mbox{in} \ \Omega, \medskip \\
u=&0 \quad \quad \quad &\mbox{on} \ \partial \Omega, \medskip
\end{array}
\right. \label{frac}
\end{equation}
where $0<\alpha<2$, $\Omega$ is a smooth bounded domain in $\mathbb
R^n$ with $n>\alpha$. Concerning the function $a(x)$, we assume that $a(x)\in
C^2(\bar\Omega)$,
$$ \Omega^+:=\{x\in \Omega: a(x)>0\} \quad \mbox{and} \quad
\Omega^-:=\{x\in \Omega: a(x)<0\} $$ are nonempty, and that
$$ \Gamma:=\bar{\Omega}^+\cap \bar{\Omega}^-\subset \Omega,
\quad \mbox{with} \ \nabla a(x)\not= 0 \ \ \forall x\in \Gamma.
$$
As for $g(u)$, it is a $C^1$ function on $\mathbb R^+$ with power-like growth at infinity
\begin{equation}\lim_{s\to \infty}\frac{g(s)}{s^p}=l>0 \quad \mbox{for some} \
p>1.
\label{ggg}
\end{equation}
Without loss of generality, we may assume that $l=1$. The model (\ref{frac}) with $\alpha=2$ has been studied in \cite{BCN},
which is called the indefinite semilinear
 problem. Hence the case
$0<\alpha<2$ we studied here could be considered as an indefinite fractional elliptic problem.

The fractional Laplacian has attracted much attention recently. It
has applications in mathematical physics, biological modeling and
mathematical finances and so on. Especially, it appears in
turbulence and water wave, anomalous dynamics, flames propagation
and chemical reactions in liquids, population dynamics, geophysical
fluid dynamics, and American options in finance. It also has connections to conformal
geometry, e.g. \cite{CG}.

The fractional Laplacian $(-\lap)^{\frac{\alpha}{2}}$ in
$\mathbb R^n$ is a nonlocal operator defined as
$$(-\lap)^{\frac{\alpha}{2}}u= C_{n,\alpha} P.V. \int_{\mathbb
R^n}\frac{u(x)-u(y)}{|x-y|^{n+\alpha}} \,dy,$$ where $ P.V.$ means in
the cauchy principle value sense. Another equivalent definition is
given by Fourier transform, that is,
$$\widehat{(-\lap)^{\frac{\alpha}{2}}u}(\xi)=|\xi|^{\alpha}\hat{u}(\xi),$$
where $u(x)$ is in the Schwartz class of functions. Observe that the
above definitions are nonlocal. Recently, Caffarelli and Silvestre in \cite{CS}
introduced a local realization of the fractional Laplacian
$(-\lap)^{\frac{\alpha}{2}}$ in $\mathbb R^n$ through the
Dirichlet-Neumann map of an appropriate degenerate elliptic operator
in  $\mathbb R^{n+1}_+$. Based on Caffarelli-Silvestre's extension, we are able to study the
fractional Laplacian $(-\lap)^{\frac{\alpha}{2}}$ in a local way and
to use the tools for semilinear elliptic equations. More
precisely, if $u\in H^{\frac{\alpha}{2}}(\mathbb R^n)$, then
$w$ is its extension in $\mathbb
R^{n+1}_+$, if it solves the equation
\begin{equation}
\begin{array}{rll}
-div(y^{1-\alpha}\nabla w)=&0 \quad \quad &\mbox{in} \ \mathbb
R^{n+1}_+, \medskip \nonumber \\
w=&u \quad \quad   \quad \quad       &\mbox{on} \ \mathbb
R^{n}\times\{y=0\}. \nonumber
\end{array}
\end{equation}
It is shown that in \cite{CS} that
$$ \lim_{y\to 0^+} y^{1-\alpha} \frac{\partial w}{\partial
y}(x,y)=k_\alpha(-\lap)^{\frac{\alpha}{2}}u(x),$$ where the constant
$$k_\alpha=\frac{2^{1-\alpha}\Gamma(1-\frac{1}{2}\alpha)}{\Gamma(\frac{1}{2}\alpha)}.
$$
To define the fractional Laplacian in a bounded domain, the idea is
to use the Caffarelli-Silvestre's extension in a cylindrical domain. See \cite{CT} for the case $\alpha=1$ and
\cite{BCDS} for its generalization to $ 0<\alpha<2$.
Let $\{\lambda_k, \phi_k \}^\infty_{k=1}$ be the eigenvalues and
corresponding eigenfunctions of the Laplacian operator $-\lap$ in
$\Omega$ with zero Dirichlet boundary values on $\partial\Omega$,
\begin{equation}
\left \{
\begin{array}{rll}
-\lap \phi_k=&\lambda_k \phi_k \quad \quad &\mbox{in} \ \Omega, \medskip \\
\phi_k=&0 \quad \quad &\mbox{on} \ \partial\Omega
\end{array}
\right.
\end{equation}
such that $\|\phi_k\|_{L^2(\Omega)}=1$ and
$0<\lambda_1<\lambda_2\leq \lambda_3\leq \cdots $. The fractional
Sobolev space $H^{\frac{\alpha}{2}}_0(\Omega)$ is defined by
$$H^{\frac{\alpha}{2}}_0(\Omega)=\{u=\sum^{\infty}_{k=1} a_k\phi_k\in L^2(\Omega): \|u\|_{H^{\frac{\alpha}{2}}_0(\Omega)}
=(\sum a^2_k\lambda_k^\frac{\alpha}{2})^{\frac{1}{2}} \},$$ which is also a
Hilbert space. Let
$$C_\Omega=\{(x,y):x\in\Omega, y\in \mathbb R_+\}\subset \mathbb
R^{n+1}_+ $$ and $\partial_LC_\Omega:=\partial\Omega\times (0, \
\infty)$ its lateral boundary. If we reformulate the nonlocal
problem (\ref{frac}) by Caffarelli-Silvestre's extension, then it
corresponds to
\begin{equation}
\left \{
\begin{array}{rll}
-div(y^{1-\alpha}\nabla w)=&0 \quad \quad &\mbox{in} \ C_{\Omega},\medskip \nonumber  \\
w(x,y)=&0 \quad \quad &\mbox{on} \ \partial_LC_\Omega, \medskip\nonumber \\
w(x,0)=&u(x) \quad \quad &\mbox{on} \ \Omega\times\{0\}, \medskip\nonumber \\
-\lim\limits_{y\to 0^+}y^{1-\alpha}\frac{\partial w}{\partial y} =
&a(x)g(u) \quad \quad   &\mbox{on} \ \Omega\times \{0\}. \medskip\nonumber
\end{array}
\right.
\end{equation}

The indefinite elliptic problem for Laplace operator
\begin{equation}
\left \{
\begin{array}{rll}
-\lap u= &a(x)u^p \quad \quad &\mbox{in} \ \Omega, \medskip \\
u=&0 \quad \quad &\mbox{on} \ \partial\Omega \label{semi}
\end{array}
\right.
\end{equation}
has been extensively studied in the literature, where $a(x)$
satisfies the condition as above. See e.g. \cite{AL}, \cite{AT}, \cite{BCN}, \cite{BCN1},
\cite{DL}, \cite{Z}, just to mention
a few. In order to prove the existence and multiplicity of positive
solutions, it is very important to obtain a priori bound of
solutions. Blow-up techniques of Gidas-Spruck \cite{GS} and Liouville
theorems are very useful in obtaining the a priori bound. Concerning
 problem (\ref{semi}), the maxima of a sequence of solutions may
blow up on $\partial \Omega$, $\Omega^+\cup \Omega^-$ or $\Gamma$.
If the blow-up occurs at $\partial \Omega$ or $\Omega^+\cup
\Omega^-$, we can make use of the classical Liouville theorems in
$\mathbb R^n$ and $\mathbb R^n_+$ to get a contradiction and hence obtain a priori bound. If the blow-up
occurs on $\Gamma$, Berestycki, Capuzzo-Docetta and Nirenberg in \cite{BCN} were
able to obtain the a priori bound by establishing a
Liouville theorem for
\begin{equation}
\left \{
\begin{array}{lrl}
-\lap u= x_1u^p \quad \quad &\mbox{in} \ \mathbb R^n, \medskip \\
u\geq 0 \quad \quad &\mbox{in} \ \mathbb R^n. \label{inde}
\end{array}
\right.
\end{equation}
It was shown that there exists no positive solution for
$p<\frac{n+2}{n-1}$. Later Chen and Li \cite{CL},\cite{CL1} further relaxed the
restriction on $a(x)$ near $\Gamma$ and obtain a priori bound with a
general $p>1$. As we know, Liouville theorem is a key in obtaining a
priori bound. There are also several Liouville theorems for
indefinite elliptic problems. Lin \cite{Lin} proved that the nonnegative
solution for
$$ -\lap u=x_1^mu^{n^\ast} \quad \quad \mbox{in} \ \mathbb R^n $$
is trivial, when $m$ is an odd positive integer and
$n^\ast=\frac{n+2}{n-2}$ is the critical exponent of Sobolev
imbedding.  Du and Li \cite{DL} considered nonnegative solution of the
problem
\begin{equation}
\left \{
\begin{array}{lll}
-\lap u=h(x_1) u^p \quad \quad &\mbox{in} \ \mathbb R^n, \medskip \\
\sup_{\mathbb R^n}<\infty \quad \quad  &\mbox{in} \ \mathbb R^n,
\end{array}
\right.
\end{equation}
where $h(t)=t|t|^{s}$ or $h(t)=(t^+)^{s}$ for some $s>0$ and $p>1$.
They showed the solution is trivial. Zhu \cite{Z1} investigated the indefinite nonlinear
boundary condition motivated by a prescribing sign-changing scalar
curvature problem on compact Riemannian manifolds with boundary.  He
proved that there exists no solution for
\begin{equation}
\left \{
\begin{array}{rll}
-\lap u=&0 \quad \quad &\mbox{in} \ \mathbb R^n,  \nonumber \medskip \\
\frac{\partial u}{\partial x_n}=&-x_1u^p \quad \quad &\mbox{on} \
\partial \mathbb R^n_+.
\end{array}
\right.
\end{equation}

If one considers the fractional indefinite problem (\ref{frac}) and applies the blow-up technique,
one will also have to deal with the case that the blow-up occurs on $\Gamma$. Hence we shall first establish a
Berestycki, Capuzzo-Docetta and Nirenberg's type Liouville theorem for the fractional Laplacian.
This is our first goal. We prove
\begin{theorem}
Let $1\leq \alpha<2$ and $w\in H^1_{loc}(y^{1-\alpha},
\overline{R^{n+1}_+})$. Then the degenerate elliptic equation
\begin{equation}
\left \{
\begin{array}{rll}
- div(y^{1-\alpha} \nabla w) =& 0 \quad \quad &\mbox{in} \ \mathbb
R^{n+1}_+,  \medskip
\\
-\lim\limits_{y\to 0^+} y^{1-\alpha}\frac{ \partial w}{\partial y}(x,
y)=&x_1 w^p(x,0) \quad \quad &\mbox{on} \
\partial\mathbb R^{n+1}_+
\label{mai}
\end{array}
\right.
\end{equation}
has no positive bounded solution provided $1<p<\infty$. \label{th1}
\end{theorem}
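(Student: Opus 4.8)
My plan is to adapt the method of moving planes (in integral/Kelvin-transform form, as developed by Chen–Li–Ou for the fractional Laplacian) together with the sign-changing coefficient $x_1$, mimicking the Berestycki–Capuzzo-Dolcetta–Nirenberg argument but in the extension setting. The first step is to work directly with the extension problem \eqref{mai}: since $1\le\alpha<2$ the weight $y^{1-\alpha}$ lies in the Muckenhoupt class $A_2$, so $w$ is (locally) Hölder continuous up to $\{y=0\}$ and the strong maximum principle and Hopf-type lemma of Caffarelli–Silvestre hold for $-\mathrm{div}(y^{1-\alpha}\nabla\cdot)$. A key preliminary reduction: because $x_1 w^p(x,0)$ is negative on $\{x_1<0\}$, the trace $u(x)=w(x,0)$ is superharmonic for $(-\triangle)^{\alpha/2}$ on the half-space $\{x_1<0\}$ and, using boundedness of $w$, one shows $u$ cannot attain its behaviour compatibly unless we get strong pointwise control; more usefully, I would first establish a monotonicity/asymptotic estimate, e.g.\ that $w$ is bounded below away from zero is impossible, forcing a decay of $w$ at infinity, which is what makes the Kelvin transform integrable.

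The heart of the argument is the moving-plane analysis in the $x_1$-direction together with auxiliary directions $x_2,\dots,x_n$. For a direction $e$ orthogonal to $e_1$ and a plane $T_\lambda=\{x\cdot e=\lambda\}$, let $w_\lambda$ be the reflection of $w$ across $T_\lambda$ and $v_\lambda=w_\lambda-w$. Since the coefficient $x_1$ is invariant under these reflections, $v_\lambda$ satisfies a linear degenerate equation in $C_{\Sigma_\lambda}$ with Neumann data $-\lim_{y\to0^+}y^{1-\alpha}\partial_y v_\lambda = x_1\big(w_\lambda^p - w^p\big)$; on the reflected region where $x_1<0$ one has $x_1<0$, and where $x_1>0$ one uses the other region, so the sign of the zeroth-order coefficient must be tracked carefully. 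One runs the planes from infinity using the decay of $w$ to start the procedure, slides them, and concludes (via the maximum principle for the $A_2$-weighted operator plus the Hopf lemma to rule out touching at an interior boundary point) that $w$ is independent of each $x_j$, $j\ge 2$, i.e.\ $w=w(x_1,y)$. Then \eqref{mai} restricted to this symmetry, integrated against a suitable test function, forces $x_1 w^p(x_1,0)\equiv 0$ in a half-line, hence $w(x_1,0)\equiv 0$ there, and by unique continuation / the maximum principle $w\equiv0$, contradicting positivity.

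An alternative and in some ways cleaner route, which I would pursue in parallel, is the Rellich–Pohozaev / integral-estimate method: test the extension equation against $w$ and against $x\cdot\nabla w + \frac{n-\alpha}{2}w$ over large half-balls $B_R^+$, use the Neumann condition to convert boundary terms into $\int_{\{y=0\}} x_1 w^{p+1}$ and $\int x_1 \partial_1(w^{p+1})$ terms, integrate the latter by parts in $x_1$ to produce a definite-sign quantity $\int_{\{y=0\}} w^{p+1}$, and control the remaining boundary integrals over the spherical part using the boundedness of $w$ plus an a priori decay estimate obtained from comparison with the fundamental solution of the weighted operator. Balancing the powers of $R$ gives the nonexistence for all $p>1$, which matches the stated range (note the range $1<p<\infty$, wider than the classical $p<\frac{n+2}{n-1}$, strongly suggests this is the route the authors take, since moving planes alone typically caps $p$).

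The main obstacle, in either approach, is the boundary behaviour of $w$ near $\{y=0\}$ where the coefficient $x_1$ changes sign: one must justify that the generalized Neumann derivative $\lim_{y\to0^+}y^{1-\alpha}\partial_y w$ exists in a strong enough sense to integrate by parts and to apply the Hopf lemma, and one must obtain the quantitative decay of the bounded solution $w$ at infinity (needed both to start moving planes and to kill the spherical boundary integrals in the Pohozaev identity). This decay is not automatic from boundedness and will require a separate lemma, presumably using the Poisson-type representation for the $A_2$-weighted operator in $\mathbb{R}^{n+1}_+$ together with the sign of the source on $\{x_1<0\}$.
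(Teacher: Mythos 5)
Your proposal does not match the paper's argument, and both of the routes you sketch have gaps at the same critical spot.

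The paper moves planes \emph{in the $x_1$ direction} (not in directions orthogonal to $e_1$) and proves a \emph{monotonicity} statement: $v_\lambda = w_\lambda - w \ge 0$ in $\Sigma_\lambda$ for every $\lambda$, i.e.\ $w$ is nondecreasing in $x_1$. That is not a symmetry result, and the fact that it holds for \emph{every} $\lambda$ (the plane goes to $+\infty$) is exactly what replaces the decay you are asking for. You instead try to first establish that $w$ decays at infinity, so you can start moving planes in perpendicular directions or control the spherical terms in a Pohozaev identity. But a bounded positive solution need not decay; in fact the monotonicity that the paper proves shows that $w$ does not go to zero as $x_1\to+\infty$. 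So the "separate lemma" you defer to — quantitative decay from the sign of the source on $\{x_1<0\}$ — is false as a general statement and cannot be supplied. Both your perpendicular-direction moving-plane argument and your Pohozaev argument collapse without it.

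The paper's mechanism is different on both ends. To start and continue the plane it divides by an explicitly chosen positive, $x$-harmonic function $g(X)$ growing at infinity (so $v_\lambda/g\to 0$ from mere boundedness of $w$), uses the sign of $x_1$ in the Neumann datum to rule out interior and boundary minima, and handles the corner point $T_{\lambda_0}\cap\{y=0\}$ with a custom Hopf-type barrier of the form $\psi(X)=h(X)-|X|^{\alpha-n}h(X/|X|^2)$ with $h(X)=\beta(|x|^{-\gamma}-1)(y^\alpha+\mu)$ — none of which appears in your sketch, and which is where the degenerate weight actually bites. The final contradiction is not a Pohozaev balance: once monotonicity holds, for cylinders $\mathcal C_R$ far to the right one has $x_1 w^{p-1}\ge (R-1)m_0^{p-1}$, and comparing with $\phi_*= \phi(x)\bigl(\psi(\lambda^{1/2}y)-\psi(\lambda^{1/2}\delta)\bigr)$, where $\phi$ is the first Dirichlet eigenfunction on a unit ball and $\psi$ is a Bessel profile solving $\psi''+\frac{1-\alpha}{s}\psi'=\psi$, yields a Robin eigenvalue contradiction when $R$ is large. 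That spectral comparison is what gives the full range $1<p<\infty$; your suggestion that the wide range "strongly suggests" a Pohozaev route is a misread of the mechanism. Your approach also never engages with the regularity issues specific to the $A_2$ weight at the corner $T_{\lambda_0}\cap\{y=0\}$, which the paper handles with Lemma \ref{tech} and Lemma \ref{lemm2}, the latter via a blow-up/compactness argument using the Harnack inequality and a gradient estimate near the plane.
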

The weighted Sobolev space in a domain $D$ in Theorem 1 is given by
$$ H^1(y^{1-\alpha}, D):=\{ u\in L^2(D, y^{1-\alpha}dxdy):
\ \ |\nabla u|\in L^2(D, y^{1-\alpha}dxdy)\}. $$

An immediate
consequence of the theorem is the following.
\begin{corollary}
Let $1\leq \alpha<2$  and $u$  be nonnegative bounded solution of
\begin{equation}
(-\lap)^{\frac{\alpha}{2}} u=x_1u^p \quad \quad \mbox{in} \ \mathbb
R^n.
\end{equation}
Then $u\equiv 0$ if $1<p<\infty$.
\end{corollary}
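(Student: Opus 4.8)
The plan is to reduce the Corollary to Theorem~\ref{th1} by means of the Caffarelli--Silvestre extension; the only preliminary work is to normalize away the constant $k_\alpha$ and to upgrade nonnegativity to strict positivity. Suppose for contradiction that $u\geq 0$ is bounded, solves $(-\lap)^{\alpha/2}u=x_1u^p$ in $\mathbb R^n$, and $u\not\equiv 0$. Set $\tilde u=k_\alpha^{1/(p-1)}u$; then $\tilde u$ is nonnegative and bounded, and a one-line computation using the homogeneity in $u$ of the right-hand side gives
$$k_\alpha(-\lap)^{\alpha/2}\tilde u=x_1\,\tilde u^{\,p}\quad\mbox{in }\mathbb R^n.$$
(One could equally use the dilation $u\mapsto \mu\,u(\lambda\,\cdot\,)$, under which $(-\lap)^{\alpha/2}u$ and the coefficient $x_1$ pick up powers of $\lambda$, to rescale the constant to $1$.) By standard elliptic regularity a bounded solution is smooth, so the computations below are justified.

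Next I would pass to the extension. Let $w$ be the Caffarelli--Silvestre extension of $\tilde u$, i.e.\ the solution of $-\mathrm{div}(y^{1-\alpha}\nabla w)=0$ in $\mathbb R^{n+1}_+$ with $w(x,0)=\tilde u(x)$. Since $\tilde u$ is bounded, $w$ is the convolution of $\tilde u$ with the (nonnegative, mass-one) Poisson kernel of the degenerate operator, so $\|w\|_{L^\infty}\leq\|\tilde u\|_{L^\infty}<\infty$, and $w\in H^1_{loc}(y^{1-\alpha},\overline{\mathbb R^{n+1}_+})$ by the regularity theory for elliptic equations with the $A_2$ weight $y^{1-\alpha}$. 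By the Dirichlet--Neumann identity of Caffarelli--Silvestre recalled in the Introduction,
$$-\lim_{y\to0^+}y^{1-\alpha}\frac{\partial w}{\partial y}(x,y)=k_\alpha(-\lap)^{\alpha/2}\tilde u(x)=x_1\,w^{\,p}(x,0),$$
so $w$ is a bounded solution of problem \eqref{mai}.

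It remains to show $w>0$, which is what Theorem~\ref{th1} requires. In the open half-space the weight $y^{1-\alpha}$ is smooth and bounded above and below on compact sets, so $L_\alpha w:=\mathrm{div}(y^{1-\alpha}\nabla w)=0$ together with $w\geq 0$ and $w\not\equiv 0$ forces $w>0$ in $\mathbb R^{n+1}_+$ by the classical strong maximum principle. If $w(x_0,0)=0$ at some boundary point, then $w$ attains its global minimum at $(x_0,0)$, which is not an interior minimum of the full solid; the Hopf boundary lemma for $L_\alpha$ at a point of $\{y=0\}$ then yields $-\lim_{y\to0^+}y^{1-\alpha}\partial_y w(x_0,y)<0$, whereas the boundary condition in \eqref{mai} evaluates the left-hand side to $x_1w^{\,p}(x_0,0)=0$, a contradiction. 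Hence $w>0$ on $\overline{\mathbb R^{n+1}_+}$, so Theorem~\ref{th1} applies to $w$ and produces a contradiction. Therefore $u\equiv 0$, as claimed.

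The only delicate ingredient, and the step I expect to be the main obstacle, is this last positivity argument: one needs the strong maximum principle and, crucially, a Hopf-type boundary lemma at the characteristic set $\{y=0\}$ for the degenerate operator $\mathrm{div}(y^{1-\alpha}\nabla\,\cdot\,)$. This is available from the De Giorgi--Nash--Moser and boundary Harnack theory for equations with $A_2$ weights (Fabes--Kenig--Serapioni), applied on half-balls centered on $\{y=0\}$; alternatively, one can work directly with the nonlocal equation, invoking the strong maximum principle for $(-\lap)^{\alpha/2}$ together with the fact that $(-\lap)^{\alpha/2}u=x_1u^p$ vanishes wherever $u$ does. Everything else is a routine application of the extension formalism and of Theorem~\ref{th1}.
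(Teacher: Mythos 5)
The paper gives no proof of this Corollary—it is announced as "an immediate consequence" of Theorem~\ref{th1}—and the intended argument is exactly the reduction you carry out: pass to the Caffarelli--Silvestre extension, note that the extension of a bounded trace is bounded and lies in $H^1_{loc}(y^{1-\alpha},\overline{\mathbb R^{n+1}_+})$, absorb $k_\alpha$ by the scaling $u\mapsto k_\alpha^{1/(p-1)}u$, and invoke Theorem~\ref{th1}. Your proof is correct and follows this same route; the only material you add beyond what the paper takes for granted is the upgrade from nonnegativity to strict positivity (interior strong maximum principle for $L_\alpha$ plus a Hopf-type lemma at $\{y=0\}$, both available from~\cite{CaS} or the Fabes--Kenig--Serapioni $A_2$ theory), and this is exactly what the paper itself does implicitly at the start of its proof of Theorem~\ref{th1} when it converts "nontrivial" into "$w>0$ in $\mathbb R^{n+1}_+$." So the proposal matches the paper's approach; one could tighten the prose by noting that once $w\geq0$, $w\not\equiv0$ and $-\mathrm{div}(y^{1-\alpha}\nabla w)=0$, the bounded solution of problem~(\ref{mai}) is automatically positive, so that Theorem~\ref{th1} as stated (for "positive bounded solutions") applies directly and the Hopf step at the boundary is strictly only needed if one insists on $w>0$ up to $\{y=0\}$.
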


With aid of the Liouville theorem for (\ref{mai}), we are able to establish a universal $L^\infty$ bound
for every solution in (\ref{frac}).
\begin{theorem}
If $1\leq \alpha<2$ and $1<p<\frac{n+\alpha}{n-\alpha}$, then there exists a universal positive constant $C$ such that every solution
in (\ref{frac}) satisfies
$$\|u\|_{\infty} \leq C. $$
\label{th2}
\end{theorem}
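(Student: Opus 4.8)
The plan is to argue by contradiction using a standard blow-up and rescaling scheme in the spirit of Gidas–Spruck, now adapted to the nonlocal setting via the Caffarelli–Silvestre extension. Suppose the conclusion fails: then there is a sequence of solutions $u_k$ of \eqref{frac} with $M_k:=\|u_k\|_{L^\infty(\Omega)}\to\infty$, attained at points $x_k\in\Omega$. Passing to a subsequence we may assume $x_k\to x_0\in\overline\Omega$. Because the subcritical exponent $p<\frac{n+\alpha}{n-\alpha}$ makes the equation invariant under the natural scaling adapted to $(-\lap)^{\alpha/2}$, I would set
$$
\mu_k = M_k^{-\frac{p-1}{\alpha}}, \qquad
v_k(x) = \frac{1}{M_k}\, u_k\big(x_k + \mu_k x\big),
$$
so that $\|v_k\|_\infty = v_k(0) = 1$ and $v_k$ solves $(-\lap)^{\alpha/2}v_k = a(x_k+\mu_k x)\, g(M_k v_k)/M_k^p$ on the rescaled domain $\Omega_k:=\mu_k^{-1}(\Omega-x_k)$. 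The growth hypothesis \eqref{ggg} with $l=1$ gives $g(M_k v_k)/M_k^p \to v_k^p$ locally uniformly wherever $v_k$ stays bounded away from $0$ and $\infty$, and the $C^2$-regularity of $a$ controls its rescaled behavior. The limiting profile $v$ is obtained by working with the extended functions $w_k$ on $C_{\Omega_k}$ and using the uniform Hölder estimates for the degenerate operator $\operatorname{div}(y^{1-\alpha}\nabla\cdot)$ together with the boundary estimates from \cite{CS,CT,BCDS}; a compactness argument then produces a nonnegative bounded limit $v$ with $v(0)=1$.

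The argument then splits according to where $x_0$ lies, exactly as in the classical case. If $x_0\in\Omega\setminus\Gamma$, then $a(x_k+\mu_k x)\to a(x_0)\neq 0$, a nonzero constant, so after a further harmless rescaling to normalize $|a(x_0)|$ to $1$ the limit $v$ solves either $(-\lap)^{\alpha/2}v = v^p$ in $\mathbb R^n$ (if $a(x_0)>0$) or $(-\lap)^{\alpha/2}v = -v^p$ in $\mathbb R^n$ (if $a(x_0)<0$), both with $v\geq 0$ bounded and $v(0)=1$; these are ruled out by the known fractional Liouville theorems in the subcritical range $1<p<\frac{n+\alpha}{n-\alpha}$. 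If $x_0\in\partial\Omega$, the standard distance-to-the-boundary comparison shows $\operatorname{dist}(x_k,\partial\Omega)/\mu_k$ must tend to a finite limit (otherwise we fall back into the previous interior case), so after rotating and translating we obtain a positive bounded solution of the fractional problem on a half-space $\mathbb R^n_+$ with zero exterior data, which again contradicts the corresponding fractional half-space Liouville theorem.

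The remaining and genuinely new case is $x_0\in\Gamma$, where $a(x_0)=0$ but $\nabla a(x_0)\neq 0$. Here a first-order Taylor expansion gives, in suitable coordinates with the $x_1$-axis along $\nabla a(x_0)$,
$$
\frac{a(x_k+\mu_k x)}{\mu_k} \;\longrightarrow\; \partial_{x_1}a(x_0)\, x_1,
$$
so one must rescale $v_k$ by the extra factor $\mu_k^{1/(p-1)}$ to absorb the linear coefficient; the precise scaling $\tilde v_k(x) = \gamma_k^{-1}u_k(x_k+\mu_k x)$ with $\gamma_k$ chosen so that $\gamma_k^{p-1}\mu_k^{-1}\to$ const, combined with $|\nabla a(x_0)|$ normalized to $1$, yields in the limit a nonnegative bounded solution of
$$
(-\lap)^{\alpha/2}v = x_1\, v^p \quad\text{in }\mathbb R^n,
$$
which is exactly the equation excluded by Corollary~1 (equivalently Theorem~\ref{th1}) for every $1<p<\infty$. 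I expect the main obstacle to be the technical bookkeeping in this $\Gamma$ case: one has to verify that the rescaled maxima do not escape to infinity along the degenerating $x_1$-direction, that the limit solution is genuinely nonzero (the normalization must be tracked carefully since two competing scales are at play), and that the uniform regularity estimates for the extended functions $w_k$ near the flat boundary $\{y=0\}$ are strong enough to pass to the limit in the Neumann-type condition. Once these points are secured, invoking Theorem~\ref{th1} closes the contradiction and establishes the universal bound $\|u\|_\infty\leq C$.
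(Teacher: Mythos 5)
Your overall blow–up skeleton matches the paper's (Caffarelli–Silvestre extension, rescale, three cases by limit point, invoke Liouville theorems for the extension problem, with the $\Gamma$ case handled by Theorem~\ref{th1}). But there are two genuine gaps.

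First, in the interior case with $a(x_0)<0$ you claim the limit $(-\lap)^{\alpha/2}v=-v^p$ is ``ruled out by the known fractional Liouville theorems.'' There is no such off-the-shelf theorem to invoke here; the paper instead rules out $a(x_0)<0$ directly by the maximum principle (the blow-up produces $V(0)=1=\max V$, which forces $\partial V/\partial\nu^\alpha(0)=a(x_0)\geq 0$, so only the sign-positive limit can occur). Your reference to Theorem~\ref{th3} shows why quoting a Liouville theorem would be dangerous: the extension equation with a negative source \emph{does} admit nontrivial nonnegative solutions.

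Second, and more seriously, your treatment of the $\Gamma$ case is incomplete. You write the Taylor expansion $a(x_k+\mu_k x)/\mu_k \to \partial_{x_1}a(x_0)\,x_1$, but this silently assumes $a(x_k)/\mu_k\to 0$, i.e.\ that the blow-up points approach $\Gamma$ fast compared with the rescaling length. In general $a(x_k)=O(\delta_k)$ with $\delta_k=\operatorname{dist}(x_k,\Gamma)$, and the ratio $\delta_k$ to the natural length scale $M_k^{(1-p)/(1+\alpha)}$ may go to $0$, to $\infty$, or to a finite limit; each regime requires a \emph{different} choice of $\lambda_j$ and produces a \emph{different} limit equation (pure $x_1 V^p$, constant coefficient, or an affine coefficient that must be re-normalized to $x_1 V^p$). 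This trichotomy, with the corresponding rescalings $\lambda_j=M_j^{(1-p)/(1+\alpha)}$ or $\lambda_j=\delta_j^{-1/\alpha}M_j^{(1-p)/\alpha}$, is the technical heart of the $\Gamma$ case in the paper; leaving it as an ``obstacle to verify'' is not a proof. Relatedly, your normalization ``$\gamma_k^{p-1}\mu_k^{-1}\to$ const'' is off: the boundary condition $\partial w/\partial\nu^\alpha=a\,w^p$ scales as $\gamma_k\mu_k^{-\alpha}=\mu_k\gamma_k^p$, giving $\gamma_k^{p-1}\mu_k^{1+\alpha}\to \mathrm{const}$, which is exactly what forces the new exponent $(1-p)/(1+\alpha)$ in the $\Gamma$ scaling.
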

Due to the importance and powerful applications of Liouville theorems in elliptic problems, we further investigate the
Liouville theorem for the equation
\begin{equation}
\left \{
\begin{array}{rll}
- div(y^{1-\alpha} \nabla w) =& 0 \quad \quad &\mbox{in} \ \mathbb
R^{n+1}_+, \medskip\\
\lim\limits_{y\to 0^+} y^{1-\alpha}\frac{ \partial w}{\partial y}(x,
y)=& w^p(x,0) \quad \quad &\mbox{on} \
\partial\mathbb R^{n+1}_+.
\label{new}
\end{array}
\right.
\end{equation}
We are able to classify all the solutions.

\begin{theorem}
Let $w\in H^1_{loc}(y^{1-\alpha},
\overline{R^{n+1}_+})$ be a nonegative solution of (\ref{new}), $0<\alpha<2$, and $p>1$, then $w=\frac{ay^\alpha}{\alpha}+b$
where $b>0$ and $ a=b^p$.
\label{th3}
\end{theorem}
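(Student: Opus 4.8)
The plan is to pass to the boundary trace $u(x):=w(x,0)\ge 0$, to show that it must be a constant, and then to recover $w$ from it.

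\textbf{Step 1 (a nonlocal equation for $u$).} I would first represent $w$. A nonnegative solution of $-\mathrm{div}(y^{1-\alpha}\nabla w)=0$ in $\mathbb R^{n+1}_+$ admits a Herglotz-type representation for the degenerate operator $L_\alpha w:=\mathrm{div}(y^{1-\alpha}\nabla w)$: its minimal positive solutions on the half-space are the Poisson kernels $P_\alpha(\cdot-\xi,\cdot)$, $\xi\in\mathbb R^n$, together with the function $y^\alpha$, so there are a constant $c\ge 0$ and a nonnegative measure $\nu$ on $\mathbb R^n$ with
\[
w(x,y)=c\,y^\alpha+\int_{\mathbb R^n}P_\alpha(x-\xi,y)\,d\nu(\xi).
\]
Since $w$ is a regular solution, continuous up to $\{y=0\}$, $\nu$ is absolutely continuous with density $u$, so $w=c\,y^\alpha+P_\alpha[u]$ with $P_\alpha[u]$ the Caffarelli--Silvestre extension of $u$. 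Using $\lim_{y\to 0^+}y^{1-\alpha}\partial_y(c\,y^\alpha)=c\alpha$ and $-\lim_{y\to0^+}y^{1-\alpha}\partial_y P_\alpha[u]=k_\alpha(-\lap)^{\frac{\alpha}{2}}u$, the Neumann condition in (\ref{new}) turns into the nonlocal equation
\[
(-\lap)^{\frac{\alpha}{2}}u=\frac{1}{k_\alpha}\bigl(c\alpha-u^{p}\bigr)\qquad\text{on }\mathbb R^n .
\]

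\textbf{Step 2 ($u$ is constant).} I would first show $u$ is bounded. If $\sup_{\mathbb R^n}u=+\infty$, a doubling-and-rescaling argument at points where $u$ is close to its supremum yields, in the limit, a bounded nonnegative function $v\not\equiv 0$ solving $(-\lap)^{\frac{\alpha}{2}}v=-v^{p}$ on $\mathbb R^n$ (the additive constant $c\alpha$ is of lower order under this scaling and disappears); but a bounded nonnegative solution of $(-\lap)^{\frac{\alpha}{2}}v+v^{p}=0$ must vanish identically, a contradiction. Hence $B:=\sup u<\infty$; set $b:=\inf u\ge 0$. At a point where $u$ attains $B$ one has $(-\lap)^{\frac{\alpha}{2}}u\ge 0$, so $c\alpha\ge B^{p}$; at a point where it attains $b$, $(-\lap)^{\frac{\alpha}{2}}u\le 0$, so $c\alpha\le b^{p}$. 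Since $b\le B$ this forces $B^{p}\le c\alpha\le b^{p}\le B^{p}$, hence $b=B$ and $u\equiv (c\alpha)^{1/p}$. (If an extremum is not attained I would first translate to a maximizing, resp.\ minimizing, sequence and pass to a locally uniform limit, which solves the same equation with the extremum now attained.)

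\textbf{Step 3 (conclusion).} Writing $b:=u\equiv(c\alpha)^{1/p}$, the extension of a constant is that constant, so $w=c\,y^\alpha+b$, and $c=b^{p}/\alpha$. Setting $a:=b^{p}\ (=c\alpha)$ gives $w=\frac{a\,y^\alpha}{\alpha}+b$ with $a=b^{p}$; and $b>0$ unless $w\equiv 0$, since $b=0$ forces $c=0$.

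I expect the genuine difficulties to be internal to Step 2: making the doubling/rescaling rigorous (uniform interior and boundary regularity for the rescaled solutions, via the estimates for $(-\lap)^{\frac{\alpha}{2}}$ with bounded right-hand side) and proving the small Liouville theorem that a bounded nonnegative solution of $(-\lap)^{\frac{\alpha}{2}}v+v^{p}=0$ is trivial, together with the case of non-attained extrema. Justifying Step 1 — that a nonnegative $L_\alpha$-harmonic function on $\mathbb R^{n+1}_+$ is $c\,y^\alpha$ plus the extension of its own trace, with no decay hypothesis on $w$ — is the other point requiring care; carrying the ``point-at-infinity'' term $c\,y^\alpha$ all the way through is precisely what produces the unbounded $y^\alpha$-growth in the final formula.
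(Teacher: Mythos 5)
Your route is genuinely different from the paper's. The paper proceeds by a Kelvin transform centered at an arbitrary point of $\{y=0\}$, a moving-plane argument (with a technical lemma to handle the resulting singularity at $0^{\lambda_0}$) to prove reflection symmetry of $\tilde w$ in the supercritical range $p>\frac{n+\alpha}{n-\alpha}$, concludes that $w$ depends only on $y$ and solves an ODE, and then disposes of $1<p\le\frac{n+\alpha}{n-\alpha}$ by the trick of adding dummy variables $x_{n+1},\dots,x_{n+m}$ so that $p$ becomes supercritical in dimension $n+m$. Your plan instead pushes the whole problem to the boundary $\{y=0\}$ via a Herglotz/Martin decomposition $w=c\,y^\alpha+P_\alpha[u]$ and then dispatches the resulting nonlocal equation with a two-sided maximum-principle comparison at the supremum and infimum of $u$. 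If it could be carried out, your route would be cleaner and more uniform: it treats all $0<\alpha<2$ and all $p>1$ in one shot without any critical/supercritical dichotomy, and the Steps 2--3 estimates you sketch (doubling-and-rescaling to kill the lower-order constant, the Liouville theorem for bounded nonnegative solutions of $(-\lap)^{\alpha/2}v+v^p=0$, the sup/inf squeeze) are standard once the reduction to $\mathbb R^n$ is established.

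The genuine gap, however, is precisely the one you flagged at the end: Step 1. The assertion that every nonnegative $L_\alpha$-harmonic function on $\mathbb R^{n+1}_+$ is of the form $c\,y^\alpha+\int P_\alpha(x-\xi,y)\,d\nu(\xi)$ --- i.e., that the Martin boundary of $(\mathbb R^{n+1}_+, L_\alpha)$ is $\mathbb R^n\cup\{\infty\}$ with minimal positive solutions exactly the Poisson kernels and $y^\alpha$ --- is not something you can simply invoke; it is not in the paper's toolkit (the paper only uses the Harnack inequality, Hopf lemma, and Hölder estimates of Cabr\'e--Sire and Jin--Li--Xiong), and I am not aware of a ready-made reference for the degenerate half-space operator. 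You would need to prove it, either by adapting Martin's theory to this weighted setting or by the more hands-on route of showing $P_\alpha[u]$ is the smallest nonnegative $L_\alpha$-harmonic function with trace $u$ and then that $w-P_\alpha[u]\ge 0$ vanishes on $\{y=0\}$ and is therefore a multiple of $y^\alpha$ (this last step resting on a boundary Harnack plus a Liouville theorem for $L_\alpha$-harmonic functions vanishing on $\{y=0\}$). Closely related, the passage from the measure $\nu$ to an absolutely continuous density $u$, and the preliminary local boundedness/H\"older regularity of $u$ needed to make the pointwise evaluations $(-\lap)^{\alpha/2}u(x_0)\ge 0$ at a supremum legitimate, also have to be established before the sup/inf argument applies. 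None of these are wrong ideas --- they are plausible and the overall structure is sound --- but Step 1 is a substantive missing theorem, not a routine detail, and until it is supplied your argument is incomplete in a way the paper's moving-plane proof is not.
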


 A similar result for the case of $\alpha=1$ has been obtained in \cite{LZ}. Equation (\ref{new}) can be regarded as the
Caffarelli-Silvestre's extension for
\begin{equation}
(-\lap)^{\frac{\alpha}{2}} u=-u^p \quad \quad \mbox{in} \ \mathbb
R^n.
\label{nonge}
\end{equation}
We know that there exist no nonnegative solution for (\ref{nonge}). Interestingly, the solutions for
(\ref{new}) exist locally and we are able to classify all.

The paper is organized as follows. Section 1 is devoted to proving Liouville theorem for (\ref{mai}).
We study the a priori bound for the fractional indefinite elliptic problem (\ref{frac}) in section 2.
In section 3, we classify all the solutions in (\ref{new}) and establish Theorem \ref{th3}.

\section{Liouville theorem for fractional indefinite problem}

An efficient way to prove the Liouville theorem is to apply the moving
plane method in appropriate settings. We show Theorem \ref{th1} by
a contradiction argument and adapt the ideas in \cite{Z1} and \cite{Lin}. The case of $\alpha=1$ is studied in \cite{Z1}
 by the method of moving planes. However,
for the general case $1\leq \alpha<2$, we have to introduce new and appropriate auxiliary functions and
take the regularity of solutions into considerations.
New ideas are introduced on selecting the auxiliary functions and more complicated calculations
are involved.

Suppose that there is a nontrivial solution to
(\ref{mai}). By the strong maximum principle in \cite{CaS}, we know that
$w(x, y)>0$ in $\mathbb R^{n+1}_+$. For ease of notation, we define
the operators
$$L_\alpha w:=y^{\alpha-1}div(y^{1-\alpha}\nabla w)=\lap
w+\frac{1-\alpha}{y}\frac{\partial w}{\partial y} $$ and
$$ \frac{\partial w}{\partial \nu^{\alpha}}:=-\lim\limits_{y\to 0^+} y^{1-\alpha}\frac{ \partial w}{\partial y}.$$
Then problem (\ref{mai}) can be rewritten in the form:
\begin{equation}
\left \{
\begin{array}{rll}
L_\alpha w =& 0 \quad \quad &\mbox{in} \ \mathbb
R^{n+1}_+, \medskip \\
\medskip
\frac{\partial w}{\partial \nu^{\alpha}}=&x_1 w^p \quad \quad
&\mbox{on} \
\partial\mathbb R^{n+1}_+.
\label{mai1}
\end{array}
\right.
\end{equation}

To employ the moving plane method, we set up some useful notation.
Let $$X=(x, y)=(x_1, \cdots, x_n, y).$$ For $\lambda\in \mathbb R$, we
define
$$\Sigma_{\lambda}=\{ X\in \mathbb R^{n+1}_+| x_1<\lambda\}
$$ and
$$T_\lambda=\{X\in \mathbb R^{n+1}_+|x_1=\lambda\}.$$
The reflection of $X$ with respect to $T_\lambda$ is
$$X^{\lambda}=(2\lambda-x_1, x_2,\cdots, x_n, y).$$
Set $w_\lambda(X)=w(X^\lambda)$. We compare the value of $w$ and
$w_\lambda$. Let
$$v_\lambda(X)=w_\lambda(X)-w(X).$$
We can verify that $v_\lambda(X)$ satisfies
\begin{equation}
\left \{
\begin{array}{rll}
L_\alpha v_\lambda =& 0 \quad \quad &\mbox{in} \ \Sigma_\lambda,  \medskip \\
\frac{\partial v_\lambda}{\partial \nu^{\alpha}}=&x_1^\lambda
w_\lambda^p-x_1 w^p \quad \quad &\mbox{on} \
\partial\mathbb R^{n+1}_+\cap \overline{\Sigma_\lambda}.
\label{cal}
\end{array}
\right.
\end{equation}

We would like to show that $w(X)$ is monotone nondecreasing in $x_1$
direction. The goal is to prove the following proposition.

\begin{proposition}
For any $\lambda\in \mathbb R$ and $X\in \Sigma_\lambda$,
$v_\lambda(X)\geq 0$ in $\Sigma_\lambda$. \label{pro1}
\end{proposition}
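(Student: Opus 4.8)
The plan is to prove Proposition~\ref{pro1} by the method of moving planes, starting the planes from $\lambda=-\infty$ and sliding them to the right, with the key feature that the sign of the coefficient $x_1$ in the nonlinearity forces monotonicity for \emph{every} $\lambda\in\mathbb R$ rather than just up to some critical value. The argument splits naturally into two parts.

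\textbf{Step 1 (starting the process).} First I would show that $v_\lambda\ge 0$ in $\Sigma_\lambda$ for all $\lambda$ sufficiently negative. Fix such a $\lambda<0$, so that in particular $x_1<\lambda<0$ throughout $\Sigma_\lambda$, hence also $x_1^\lambda=2\lambda-x_1$ satisfies $x_1^\lambda<\lambda<0$. Suppose for contradiction that $v_\lambda$ is negative somewhere. Since $w$ is bounded and positive and $v_\lambda\to 0$ in an appropriate sense as $|X|\to\infty$ (because $w$ is bounded, one controls the negative part via a suitable auxiliary/barrier function — this is exactly where the paper warns that ``new and appropriate auxiliary functions'' must be introduced, so I would multiply $v_\lambda$ by a carefully chosen positive weight $\psi(X)$ and work with $\bar v_\lambda=v_\lambda/\psi$), the infimum of $v_\lambda$ (or of $\bar v_\lambda$) is negative and attained at some point $X_0\in\overline{\Sigma_\lambda}$. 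By the maximum principle for $L_\alpha$ (which is the uniformly elliptic operator $\Delta+\frac{1-\alpha}{y}\partial_y$ away from $y=0$, degenerate in the $A_2$-Muckenhoupt sense à la Fabes--Kenig--Serapioni) and since $L_\alpha v_\lambda=0$ in $\Sigma_\lambda$, the interior minimum cannot occur in the open half-space; and it cannot occur on $T_\lambda$ since $v_\lambda=0$ there. So $X_0$ lies on $\partial\mathbb R^{n+1}_+\cap\overline{\Sigma_\lambda}$ with $y=0$, and there the Hopf-type lemma for the fractional normal derivative (as in \cite{CaS}) gives $\frac{\partial v_\lambda}{\partial\nu^\alpha}(X_0)<0$. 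On the other hand, at $X_0$ we have $w_\lambda(X_0)<w(X_0)$ and both $x_1,x_1^\lambda<0$; using the boundary equation \eqref{cal} together with the monotonicity of $s\mapsto s^p$ and $x_1^\lambda<x_1<0$ (so $x_1^\lambda w_\lambda^p\ge x_1^\lambda w^p\ge x_1 w^p$, the last because $x_1^\lambda\le x_1<0$ and $w^p>0$) one gets $\frac{\partial v_\lambda}{\partial\nu^\alpha}(X_0)\ge 0$, a contradiction. Hence $v_\lambda\ge 0$ for $\lambda\ll 0$.

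\textbf{Step 2 (moving the plane).} Define $\lambda_0=\sup\{\lambda:\ v_\mu\ge 0\text{ in }\Sigma_\mu\ \text{for all }\mu\le\lambda\}$. By Step~1 this set is nonempty; the goal is to show $\lambda_0=+\infty$. Suppose $\lambda_0<\infty$. By continuity $v_{\lambda_0}\ge 0$ in $\Sigma_{\lambda_0}$, and by the strong maximum principle / Hopf lemma (using $L_\alpha v_{\lambda_0}=0$ and the boundary condition) either $v_{\lambda_0}\equiv 0$ or $v_{\lambda_0}>0$ in the interior with a strict normal-derivative sign on the boundary. The identically-zero case is immediately excluded: it would force $w$ to be symmetric about $T_{\lambda_0}$, but then plugging into the boundary equation gives $x_1 w^p = (2\lambda_0-x_1)w^p$ on $\{y=0\}$, i.e.\ $w\equiv 0$ there, contradicting positivity. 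So $v_{\lambda_0}>0$ strictly. Now I would push $\lambda$ slightly past $\lambda_0$: for $\lambda=\lambda_0+\eps$ with $\eps$ small, write $\Sigma_\lambda$ as a ``large central region'' $K$ (where $v_{\lambda_0}\ge\delta>0$, so by continuity $v_\lambda\ge 0$) plus the complement (a neighborhood of $T_\lambda$, the boundary, and infinity, all thin in the relevant weighted sense). On the complement one reruns the maximum-principle argument of Step~1 — this is where the auxiliary weight $\psi$ and the precise decay analysis matter, and where the restriction $1\le\alpha<2$ and the $H^1(y^{1-\alpha})$-regularity of $w$ enter to make the boundary integrations by parts and the Hopf lemma legitimate — concluding $v_\lambda\ge 0$ on all of $\Sigma_\lambda$, which contradicts the definition of $\lambda_0$. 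Therefore $\lambda_0=+\infty$, which is exactly the assertion of the proposition.

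\textbf{Main obstacle.} The delicate point is \emph{not} the algebra with the $x_1$ coefficient (that is what makes the plane slide freely to $+\infty$ here, in contrast to the classical Gidas--Ni--Nirenberg situation), but rather controlling $v_\lambda$ near infinity and near $\{y=0\}$ for the \emph{degenerate} operator $L_\alpha$: one only knows $w$ is bounded, not that it decays, so a naive maximum principle on the unbounded $\Sigma_\lambda$ fails, and the boundary term $\frac{\partial w}{\partial\nu^\alpha}$ is a limit of a degenerate co-normal derivative. I expect the heart of the proof to be the construction of the auxiliary function $\psi$ (polynomially growing in $x_1$, with the right $y^\alpha$-type behavior to be $L_\alpha$-superharmonic or to absorb the degeneracy at $y=0$) so that $\bar v_\lambda=v_\lambda/\psi$ satisfies a good equation with a favorable sign in its zeroth-order term and genuinely vanishes at infinity, letting the minimum be attained and the contradiction be extracted cleanly at $y=0$; the regularity needed to differentiate $w$ and to apply the Hopf lemma at the degenerate boundary is the accompanying technical burden.
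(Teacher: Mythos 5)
Your Step 1 contains a sign error that also reveals a wrong picture of $\Sigma_\lambda$. In $\Sigma_\lambda=\{x_1<\lambda\}$ the reflection satisfies $x_1^\lambda=2\lambda-x_1>\lambda>x_1$, so $x_1^\lambda>x_1$ (not $x_1^\lambda<x_1$ as you wrote), and $x_1^\lambda$ is \emph{not} negative throughout $\Sigma_\lambda$, no matter how negative $\lambda$ is (take $x_1\ll\lambda$: then $x_1^\lambda=2\lambda-x_1\gg 0$). In particular the first link in your chain, $x_1^\lambda w_\lambda^p\ge x_1^\lambda w^p$, fails wherever $x_1^\lambda>0$, and the second link is justified by an inequality that is the reverse of the true one. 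The paper's grouping avoids this: for $\lambda\le0$ it writes $x_1^\lambda w_\lambda^p-x_1 w^p> x_1\bigl(w_\lambda^p-w^p\bigr)>0$, where the first inequality uses only $x_1^\lambda>x_1$ and $w_\lambda^p>0$, and the second uses $x_1<\lambda\le0$ together with $w_\lambda<w$ at the negative minimum. Note also that the paper gets Step 1 for \emph{all} $\lambda\le0$, not merely $\lambda\ll0$; this gives $\lambda_0\ge0$ and is used downstream. Finally, the auxiliary function is not just any decay weight: the paper's $g$ is harmonic, blows up at infinity, has $\partial_y g=0$ on $\{y=0\}$ (so that $\partial\bar v_\lambda/\partial\nu^\alpha=\frac1g\,\partial v_\lambda/\partial\nu^\alpha$ with no extra boundary term) and $\partial_y g>0$ in the interior (so the zeroth-order coefficient $\frac{1-\alpha}{y}\frac{\partial_y g}{g}$ has the right sign when $1\le\alpha<2$).

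The larger gap is Step 2. Your plan — split $\Sigma_\lambda$ into a fat core $K$ where $v_{\lambda_0}\ge\delta$ and a thin complement where you ``rerun Step~1'' — is the classical Gidas--Ni--Nirenberg continuation, and it does not work here: beyond $\lambda_0\ge0$ the boundary coefficient $x_1$ changes sign across $\{x_1=0\}$, so the sign argument that killed boundary minima in Step~1 (which hinged on $x_1<0$) no longer applies on the thin strip. The paper's Step~2 is therefore entirely different: it first proves a corner Hopf lemma (Lemma 2) at the edge $T_{\lambda_0}\cap\partial\mathbb R^{n+1}_+$ by constructing a bespoke barrier $\psi(X)=h(X)-|X|^{\alpha-n}h(X/|X|^2)$, and then runs a compactness argument: a sequence of negative-minimum points $X^j$ at $\lambda_j\downarrow\lambda_0$, translation in the $\tilde x$-directions to a limit $\tilde w$, a dichotomy $\tilde w\equiv0$ or not, then a Harnack-rescaling blow-down $W^j=w^j/w^j(0)\to1$, a smallness estimate on $|\nabla_x w^j|$, and a direct contradiction with the boundary sign. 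None of this is present in your sketch; the ``thin-complement'' approach would not reach a contradiction in this problem. (Your observation that $v_{\lambda_0}\equiv0$ is impossible, via $2(x_1-\lambda_0)w^p=0$ on $\{y=0\}$, is correct and a nice explicit version of a step the paper leaves implicit.)
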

\begin{proof}
We divide the proof in two major steps.

\medskip
\emph{Step 1:} For any $\lambda\leq 0$ and $X\in \Sigma_\lambda$,
$v_\lambda(X)\geq 0$ in $\Sigma_\lambda$.
\medskip

We select the test function $$g(X)=\sum^{n}_{i=2} \ln
[(2-x_1)^2+x_i^2]+\ln[(2-x_1)^2+y^2].$$ It is easy to see that
$$g(X)\to \infty \quad \mbox{as} \ |X|\to \infty.$$ We can also check
that for $\lambda\leq 0$,
\begin{equation}\lap g(X)=0, \quad g(X)>0, \; \frac{\partial g}{\partial y} > 0 \;
\ \mbox{in} \ \Sigma_\lambda  \quad \mbox{and} \ \ \frac{\partial
g}{\partial y}=0 \ \mbox{on} \
\partial \mathbb R^{n+1}_+\cap \overline{\Sigma_\lambda} .
\label{fac}
\end{equation}

Define $$\bar v_\lambda(X)=\frac{v_\lambda(X)}{g(X)}.$$ Then $\bar
v_\lambda$ satisfies the following equation

\begin{equation}
\left \{
\begin{array}{lll}
 L_\alpha\bar v_\lambda+2 \frac{\nabla g}{g}\cdot \nabla \bar v_\lambda+\frac{1-\alpha}
{y}\frac{1}{g}\cdot\frac{\partial g}{\partial y}\cdot\bar v_\lambda  = 0 \quad \quad &\mbox{in} \ \Sigma_\lambda, \medskip\\
\frac{\partial \bar v_\lambda}{\partial \nu^{\alpha}}=
\frac{1}{g}\cdot (x_1^\lambda w_\lambda^p-x_1 w^p) \quad \quad
&\mbox{on} \
\partial\mathbb R^{n+1}_+\cap \overline{\Sigma_\lambda}.
\label{call}
\end{array}
\right.
\end{equation}
\end{proof}
In the above equation, we have used the facts in (\ref{fac}). Since $w$ is bounded, then $\bar
v_\lambda(X)\to 0$ as $|X|\to \infty$. If $\bar v_\lambda(X)<0$ at
some point in $\Sigma_\lambda$, then $\bar
v_\lambda(X^0)=\inf_{\Sigma_\lambda}\bar v_\lambda(X)$ must be
attained in $\overline{\Sigma_\lambda}$. If $X^0\in \Sigma_\lambda$,
then $$\lap \bar{v}_\lambda (X^0)\geq 0, \ \ \nabla \bar v_\lambda(X^0)=0,
\ \ \mbox{and} \ \ \bar v_\lambda(X^0)<0.$$ It contracts the first equation in
(\ref{call}) since $1\leq \alpha<2$. It is also impossible for $X^0\in
T_\lambda$. So $X^0\in \partial \mathbb R^{n+1}_+\cap
\{x_1<\lambda\}$, which implies that $\frac{\partial \bar
v_\lambda}{\partial y}(X^0)\geq 0$. Hence
$$\frac{\partial \bar v_\lambda}{\partial \nu^{\alpha}}\leq 0.$$
However, $$ x_1^{0, \lambda} w^p_\lambda(X^0)-x_1^0 w^p(X^0)> x_1^0(
w^p_\lambda(X^0)-w^p(X^0))>0, $$ where $x_1^{0, \lambda}$ is the
reflection of $x_1^0$ with respect to $T_\lambda$. Obviously a
contradiction is arrived because of the second equation in
(\ref{call}). Therefore $\bar v_\lambda(X)\geq 0$ in
$\Sigma_\lambda$ for $\lambda\leq 0$, so is $v_\lambda$. We complete
the first step.

We move the plane further to the right. Define
$$\lambda_0=\sup \{\lambda : v_\lambda(X)\geq 0 \ \mbox{in} \ \Sigma_\mu \ \mbox{for} \ \mu\leq \lambda\}.$$
From the conclusion in step 1, we know that $\lambda_0\geq 0$. We shall show that the plane can be moved all the
way to the positive infinity, that is,

\medskip
\emph{ Step 2}: $\lambda_0=+\infty.$
\medskip

We also prove it by contradiction. Suppose that $\lambda_0<+\infty$.
It is clear that $v_{\lambda_0}(X)\not \equiv 0$. By the  maximum
principle in \cite{CaS}, we infer that \begin{equation} v_{\lambda_0}(X)>0
\quad \mbox{in} \ \Sigma_{\lambda_0} \label{back}
\end{equation}
and
$$ \frac{\partial v_{\lambda_0}}{\partial x_1}(X)<0 \quad \mbox{for}
\ X\in T_{\lambda_0}\cap \Sigma_{\lambda_0}. $$

In order to derive a contradiction in the future argument, a subtle
analysis has to be given to the corner point $\hat{X}\in
T_{\lambda_0}\cap
\partial \mathbb R^{n+1}_+$. We are able to establish the following technical
lemma to take care of $\hat{X}$. Our argument is inspired by lemma 2.4 in \cite{LYZ}.
Since we consider degenerate elliptic equations, more considerations have to be taken into the choice
of auxiliary function. After careful calculations, we are able to find the desired auxiliary function in
fractional Laplacian setting.

\begin{lemma}
Assume that $v_{\lambda_0}$ satisfies (\ref{cal}) and
$v_{\lambda_0}>0$ in $\Sigma_{\lambda_0}.$ If $\hat{X}\in
T_{\lambda_0}\cap\partial \mathbb R^{n+1}_+$, then
$$\frac{\partial v_{\lambda_0}}{\partial x_1}(\hat{X})<0.$$
\label{tech}
\end{lemma}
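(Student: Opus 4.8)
The plan is to establish Lemma~\ref{tech} by a Hopf-type comparison localized near the corner point, in the spirit of the argument the authors cite from \cite{LYZ}, but with an auxiliary function tailored to the degenerate operator $L_\alpha$ and to the Neumann-type boundary condition. Write $\hat X=(\hat x,0)$ with $\hat x=(\lambda_0,\hat x_2,\dots,\hat x_n)$, and set $x'=(x_2,\dots,x_n)$, $\hat x'=(\hat x_2,\dots,\hat x_n)$. Since $\hat X$ lies simultaneously on the Dirichlet face $T_{\lambda_0}$, where $v_{\lambda_0}=0$, and on the degenerate boundary $\partial\mathbb R^{n+1}_+$, no ordinary Hopf lemma applies, so one must build a barrier admissible for both. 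I would work in the thin cylinder
$$U=\{X:\lambda_0-\rho<x_1<\lambda_0,\ |x'-\hat x'|<\rho,\ 0<y<\tau\}$$
with $\rho>0$ small and $\tau$ small enough that $\tfrac{n-1}{2-\alpha}\tau^2<\rho^2$, and compare $v_{\lambda_0}$ with
$$h(X)=\epsilon\,(\lambda_0-x_1)\,\psi(X),\qquad \psi(X)=C-|x'-\hat x'|^2+\frac{n-1}{2-\alpha}\,y^2 ,$$
for a small $\epsilon>0$ and a constant $C$ with $0<C<\rho^2-\tfrac{n-1}{2-\alpha}\tau^2$. The coefficient $\tfrac{n-1}{2-\alpha}$ of $y^2$ is exactly what makes $L_\alpha\psi\equiv 0$: by a direct computation it balances $\Delta(-|x'-\hat x'|^2)=-2(n-1)$ against the singular drift $\tfrac{1-\alpha}{y}\partial_y$, and since $2-\alpha>0$ it also gives $\frac{\partial\psi}{\partial\nu^\alpha}=-\lim_{y\to0^+}y^{1-\alpha}\partial_y\psi=0$. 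Using $\partial_{x_1}\psi=0$ together with $L_\alpha\big((\lambda_0-x_1)\psi\big)=(\lambda_0-x_1)L_\alpha\psi-2\partial_{x_1}\psi$, one gets $L_\alpha h\equiv 0$ in $U$ and $\frac{\partial h}{\partial\nu^\alpha}\equiv 0$ on $\partial\mathbb R^{n+1}_+\cap\overline U$.

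The next step is to verify $v_{\lambda_0}\ge h$ on all of $\partial U$. On $\{x_1=\lambda_0\}\cap\overline U\subset T_{\lambda_0}$ both functions vanish. On the lateral face $\{|x'-\hat x'|=\rho\}\cap\overline U$ one has $\psi=C-\rho^2+\tfrac{n-1}{2-\alpha}y^2\le C-\rho^2+\tfrac{n-1}{2-\alpha}\tau^2<0$, so $h\le0\le v_{\lambda_0}$. On $\{x_1=\lambda_0-\rho\}\cap\overline U$, which stays at a fixed positive distance from $T_{\lambda_0}$, one has $v_{\lambda_0}\ge\delta>0$ since $v_{\lambda_0}>0$ in $\Sigma_{\lambda_0}$ and, by the Hopf lemma, on $\partial\mathbb R^{n+1}_+\cap\{x_1<\lambda_0\}$; as $h\le\epsilon(\lambda_0-x_1)\sup_U|\psi|\le\epsilon\,\rho\sup_U|\psi|$, this forces $h\le v_{\lambda_0}$ once $\epsilon$ is small. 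On $\{y=\tau\}\cap\overline U$ the same bound on $h$ applies; away from $T_{\lambda_0}$ again $v_{\lambda_0}\ge\delta_\eta>0$, while near $T_{\lambda_0}$ the already-proved inequality $\frac{\partial v_{\lambda_0}}{\partial x_1}<0$ on $T_{\lambda_0}\cap\Sigma_{\lambda_0}$ (usable since $y=\tau>0$) gives $v_{\lambda_0}\ge c\,(\lambda_0-x_1)\ge h$ after shrinking $\epsilon$. Since $v_{\lambda_0}-h$ is $L_\alpha$-harmonic in $U$, the maximum principle of \cite{CaS} places its minimum over $\overline U$ on $\partial U$ (and if that minimum were interior, $v_{\lambda_0}-h$ would be constant, hence $\equiv0$ by its values on $T_{\lambda_0}$, which already gives the conclusion below). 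The only boundary part not yet controlled is the relative interior of $\{y=0\}\cap\overline U$, where a negative minimum at a point $X^0$ would imply $\frac{\partial(v_{\lambda_0}-h)}{\partial\nu^\alpha}(X^0)\le0$, contradicting
$$\frac{\partial(v_{\lambda_0}-h)}{\partial\nu^\alpha}(X^0)=x_1^{0,\lambda_0}w_{\lambda_0}^p(X^0)-x_1^0\,w^p(X^0)>0,$$
the strict positivity being obtained, exactly as in the proof of Proposition~\ref{pro1}, from $x_1^0<\lambda_0$, $w>0$, $w_{\lambda_0}\ge w$ and $\lambda_0\ge0$. Hence $v_{\lambda_0}\ge h$ throughout $\overline U$.

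To conclude, restrict to the segment $\{(x_1,\hat x',0):\lambda_0-\rho\le x_1\le\lambda_0\}\subset\overline U$: there $v_{\lambda_0}(x_1,\hat x',0)\ge h(x_1,\hat x',0)=\epsilon C(\lambda_0-x_1)$ while $v_{\lambda_0}(\hat X)=0$. Since $(\lambda_0,\hat x')$ is an interior point of $\mathbb R^n$, interior regularity for the nonlocal equation makes $v_{\lambda_0}(\cdot,0)$ smooth near $\hat x$, so $\frac{\partial v_{\lambda_0}}{\partial x_1}(\hat X)$ exists; dividing the inequality by $x_1-\lambda_0<0$ and letting $x_1\to\lambda_0^-$ gives $\frac{\partial v_{\lambda_0}}{\partial x_1}(\hat X)\le-\epsilon C<0$, which is the assertion. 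I expect the genuinely delicate point to be the construction of $\psi$: it must at once be $L_\alpha$-harmonic (which is what pins the weight $\tfrac{n-1}{2-\alpha}$ and where both the singular drift and the constraint $\alpha<2$ enter), have vanishing $\nu^\alpha$-flux on $\partial\mathbb R^{n+1}_+$, be positive at $\hat X$, and be nonpositive on the lateral faces of $U$ — the latter two competing requirements being precisely what dictate that the cylinder be thin in the $y$-direction and $C$ be small. A secondary technical issue, which the authors also flag, is ensuring enough regularity of $w$ (hence of $v_{\lambda_0}$) up to the corner edge $T_{\lambda_0}\cap\partial\mathbb R^{n+1}_+$ for the comparison on $\{y=0\}$ and for the final passage to the limit.
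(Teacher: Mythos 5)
Your argument is correct, and it takes a genuinely different route from the paper. The paper normalizes $\lambda_0=1$, $\hat X=(1,0,\dots,0)$ and works in the annular region $\hat\Omega=\{X\in\mathbb B_1^+\setminus\overline{\mathbb B_{1/2}^+}:\ y\le 1/5\}$ \emph{away} from $\hat X$, building the barrier $\psi(X)=h(X)-|X|^{\alpha-n}h(X/|X|^2)$ from the ``Kelvin-reflected'' pair $h(X)=\beta(|x|^{-\gamma}-1)(y^\alpha+\mu)$ and choosing $\gamma>\max\{(n+\alpha)/2,\,n-2\}$ so that $L_\alpha\psi\ge 0$; the lower bound on $A=v_{\lambda_0}-\psi$ on each piece of $\partial\hat\Omega$ is then arranged through the small parameters $\beta,\mu$, and $\mu$ is tuned against a mean-value inequality at the degenerate face to preclude a negative minimum there. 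You instead work in a thin cylinder $U$ \emph{around} $\hat X$ and use the barrier $h=\epsilon(\lambda_0-x_1)\bigl(C-|x'-\hat x'|^2+\tfrac{n-1}{2-\alpha}y^2\bigr)$, which you correctly verify is \emph{exactly} $L_\alpha$-harmonic and has \emph{identically} vanishing $\nu^\alpha$-flux (precisely because $\alpha<2$ makes $y^{2-\alpha}\to 0$). That eliminates both extra parameters of the paper's construction and reduces the degenerate-face step to the bare Neumann-sign comparison $\partial_{\nu^\alpha}(v_{\lambda_0}-h)=x_1^{\lambda_0}w_{\lambda_0}^p-x_1w^p>0$. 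The one new ingredient you need, and which the paper's annulus-away-from-$\hat X$ construction deliberately avoids, is the control of $v_{\lambda_0}$ on the ``lid'' $\{y=\tau\}$ near $T_{\lambda_0}$; you handle this by invoking the interior Hopf inequality $\partial_{x_1}v_{\lambda_0}<0$ on $T_{\lambda_0}\cap\{y>0\}$, which the paper has already established from the maximum principle in \cite{CaS} before stating this lemma, so there is no circularity — though it would be worth flagging explicitly that the compactness/uniformity argument giving $v_{\lambda_0}\ge c(\lambda_0-x_1)$ on that lid uses the continuity of $\partial_{x_1}v_{\lambda_0}$ at $y=\tau$ (which holds since $L_\alpha$ is uniformly elliptic away from $y=0$). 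In short, your barrier is simpler and has cleaner algebra; the paper's avoids appealing to the interior Hopf inequality on the top face at the cost of a more elaborate (Kelvin-type) barrier and two tuning parameters. Both are valid proofs of the lemma.
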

\begin{proof}
Without loss of generality, we assume that $\lambda_0=1$ and
$\hat{X}=(1,0,\cdots, 0)$. Set
$$\hat{\Omega}:=\{X: \mathbb B_1^+\backslash \overline{\mathbb B^+_{\frac{1}{2}}}, \ y\leq \frac{1}{5}\}.$$
Introduce the function
$$ h(X)=\beta(|x|^{-\gamma}-1)(y^{\alpha}+\mu),$$
where $\gamma>\max\{\frac{n+\alpha}{2}, \ (n-2) \}$ and $\beta,
\mu>0$ will be determined later. The test function is given by
$$\psi(X)=h(X)-|X|^{\alpha-n}h(\frac{X}{|X|^2}).$$
Let $$Z(X)=|X|^{\alpha-n}w(\xi),$$ where $\xi:=\frac{X}{|X|^2}$.

A matter of calculus yields that
\begin{equation} L_\alpha
Z(X)=|X|^{\alpha-n-4} L_\alpha w(\xi). \label{matt}
\end{equation}
Direct calculations also show that
$$ L_\alpha h(X)=\beta \gamma
(\gamma+2-n)|x|^{-\gamma-2}(y^\alpha+\mu).$$ It follows from
(\ref{matt}) that
$$ L_\alpha (|X|^{\alpha-n}h(\frac{X}{|X|^2}))=\beta \gamma
(\gamma+2-n)|X|^{\alpha-n+2\gamma}|x|^{-\gamma-2}(|X|^{-2\alpha}y^{\alpha}+\mu).
$$
Therefore,
\begin{eqnarray}
L_\alpha \psi(X)&=&\beta \gamma
(\gamma+2-n)|x|^{-\gamma-2}(y^\alpha+\mu-y^\alpha|X|^{2\gamma-\alpha-n}-|X|^{2\gamma+\alpha-n}\mu)\nonumber \\
&\geq & 0
\end{eqnarray}
since  $\gamma>\max\{\frac{n+\alpha}{2}, \ (n-2) \}$ and
$\frac{1}{2}<|X|<1$. Let $$A(X)=v_{\lambda_0}(X)-\psi(X).$$ It follows
that
\begin{equation}
\left \{
\begin{array}{rll}
L_\alpha A(X) \leq & 0 \quad \quad &\mbox{in} \ \hat{\Omega},\medskip  \\
\frac{\partial A(X)}{\partial \nu^{\alpha}}\geq &x_1(
w_{\lambda_0}^p-w^p)+\frac{\partial \psi}{\partial \nu^{\alpha}} \quad
\quad &\mbox{on} \ \ \hat{\Omega}\cap \partial\mathbb R^{n+1}_+.
\label{tes}
\end{array}
\right.
\end{equation}
By choosing suitable small $\beta$ and $\mu$, we want to show that
\begin{equation}
A(X)\geq 0 \quad \forall X\in \hat{\Omega}. \label{aim}
\end{equation}
Since $v_{\lambda_0}>0$ in $\Sigma_{\lambda_0}$, we can find some positive constant
$\beta_0$ such that
$$ A(X)\geq 0 \quad \mbox{on} \ \partial\hat{\Omega}\cap
\{\partial\mathbb B_{\frac{1}{2}}\cup \{y=1/5\}\} $$
 for
all $0<\beta<\beta_0$. By the construction of $\psi$, we know that
$\psi(X)=0$ on $\partial\mathbb B_1$. Then $A(X)>0$ on
$\partial\hat{\Omega}\cap\partial\mathbb B_1$. If (\ref{aim}) is not
true, then there exist some $\bar X=(\bar x, \bar y)$ such that
$$ A(\bar X)=\min_{\hat{\Omega}} A(X)<0.$$
By the maximum principle, it yields that $\bar X\in \hat{\Omega}\cap
\partial\mathbb R^{n+1}_+$, that is, $\bar y=0$. We have
\begin{equation}
v_{\lambda_0}(\bar X)<\beta\mu(|\bar x|^{-\gamma}-1)(|\bar
x|^{\gamma+\alpha-n}+1) \label{min}
\end{equation}
and
\begin{equation}
\frac{\partial A}{\partial y}(\bar X)\geq 0, \ \ i.e.  \ \
\frac{\partial A}{\partial \nu^{\alpha}}(\bar X)\leq 0. \label{kao}
\end{equation}
If we calculate $\frac{\partial\psi}{\partial \nu^\alpha}$ at
$\bar X$, it is shown that
\begin{equation}
\frac{\partial\psi}{\partial \nu^\alpha}(\bar X)=\alpha\beta(|\bar
x|^{-\gamma}-1)(1+|\bar x|^{\gamma-\alpha-n}). \label{non}
\end{equation}
The inequality (\ref{kao}) and equality (\ref{non}) together with
the second inequality in (\ref{tes}) implies that
$$ p\bar x_1 \xi^{ p-1}(\bar X) v_{\lambda_0}(\bar X)+\alpha\beta(|\bar
x|^{-\gamma}-1)(1+|\bar x|^{\gamma-\alpha-n})\leq 0,   $$ where
$\xi(\bar X)$ is between $w_{\lambda_0}(\bar X)$ and
$w(\bar X)$.

Thanks to (\ref{min}), it follows that
\begin{equation}
\mu p\bar x_1 \xi^{ p-1}(\bar X)+\alpha\leq 0. \label{fin}
\end{equation}
 If we choose $$0<\mu<\min_{\frac{1}{2}\leq |X|\leq
1}\frac{\alpha}{1+p|x|\xi^{p-1}(X)}$$ at the beginning, we will arrive at a
contradiction from (\ref{fin}). Therefore $A(X)\geq 0$ in
$\hat{\Omega}$. Note that $A(\hat{X})=0$, which implies that
$$ \frac{\partial A}{\partial x_1}(\hat{X})\leq 0.$$ Thus
\begin{eqnarray}
\frac{\partial v_{\lambda_0}(\hat{X})}{\partial x_1}&=&
\frac{\partial
A(\hat{X})}{\partial x_1}+ \frac{\psi(\hat{X})}{\partial x_1} \nonumber \medskip \\
&\leq &\frac{\psi(\hat{X})}{\partial x_1}=-2\alpha\beta\mu<0.
\nonumber
\end{eqnarray}
This completes the proof of the lemma.
\end{proof}

We now continue our argument in Step 2. Let $$g(X)=\sum^{n}_{i=2} \ln
[(2+\lambda_0-x_1)^2+x_i^2]+\ln[(2+\lambda_0-x_1)^2+y^2] $$and $$\bar
v_\lambda(X)=\frac{v_\lambda(X)}{g(X)}.$$ We know that $\bar
v_\lambda(X)$ satisfies (\ref{call}) in $\Sigma_{\lambda_0}$. For
a fixed $\lambda$, it is true that $\bar v_\lambda(X)\to 0$ as $|x|\to
\infty$. From the definition of $\lambda_0$ and the maximum
principle, there exist sequences of $\lambda_j>\lambda_0$ and
$X^j\in \overline\Sigma_{\lambda_j}\cap \partial \mathbb R^{n+1}_+
$, such that $\lambda_j\to \lambda_0$ as $j\to \infty$ and
$$ \bar v_{\lambda_j}(X^j)=\inf_{\Sigma_{\lambda_j}} \bar  v_{\lambda_j}(X)<0. $$
Moreover,
\begin{equation}
\frac{\partial \bar v_{\lambda_j}}{\partial x_1}(X^j)=0 \quad
\mbox{and} \quad \frac{\partial \bar v_{\lambda_j}}{\partial
\nu^{\alpha}}(X^j)\leq 0. \label{mor}
\end{equation}
However, we are able to show that (\ref{mor}) contradicts the
following lemma. To complete the proof of Proposition \ref{pro1}, we only need to
prove the following result.

\begin{lemma}
Let $\lambda_j>\lambda_0$ and $\lambda_j\to \lambda_0$ as $j\to
\infty$. If $X^j\in \Sigma_{\lambda_j}\cap  \partial \mathbb
R^{n+1}_+$ with
$$\frac{\partial \bar v_{\lambda_j}}{\partial x_1}(X^j)=0 \quad \mbox{and} \quad \bar v_{\lambda_j}(X^j)<0,$$
then there exists a sufficiently large $j_0$ such that
$$\frac{\partial \bar v_{\lambda_j}}{\partial
\nu^{\alpha}}(X^j)>0 \quad \forall j>j_0.  $$ \label{lemm2}
\end{lemma}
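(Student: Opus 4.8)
The plan is to argue by contradiction. Suppose Lemma \ref{lemm2} is false; then, after passing to a subsequence, $\frac{\partial \bar v_{\lambda_j}}{\partial \nu^{\alpha}}(X^j)\leq 0$ for every $j$. Write $X^j=(x_1^j,z^j,0)$ with $z^j\in\mathbb R^{n-1}$. Since $\bar v_{\lambda_j}$ satisfies (\ref{call}) and $g>0$, its boundary condition reads
$$\frac{\partial \bar v_{\lambda_j}}{\partial \nu^{\alpha}}(X^j)=\frac{1}{g(X^j)}\bigl(x_1^{j,\lambda_j}w_{\lambda_j}^p(X^j)-x_1^j w^p(X^j)\bigr),\qquad x_1^{j,\lambda_j}:=2\lambda_j-x_1^j .$$
If $x_1^j\leq 0$ then $x_1^{j,\lambda_j}>0$ (because $\lambda_j>\lambda_0\geq 0$), so, since $w>0$ everywhere by the strong maximum principle, the right–hand side is strictly positive — a contradiction. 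Hence $x_1^j>0$, and as $x_1^j<\lambda_j\to\lambda_0$ the $x_1^j$ are bounded; along a further subsequence $x_1^j\to x_1^\infty\in[0,\lambda_0]$. I would also record at the outset that $v_{\lambda_0}\not\equiv 0$ (otherwise $w$ is symmetric about $T_{\lambda_0}$, and equating the Neumann data of (\ref{mai1}) on the two sides of $T_{\lambda_0}$ forces $w\equiv 0$ on $\{y=0\}$, hence $w\equiv 0$), so that $v_{\lambda_0}>0$ in $\Sigma_{\lambda_0}$ and $v_{\lambda_0}\geq 0$ on $\overline{\Sigma_{\lambda_0}}$.

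The argument now splits on the behaviour of $z^j$. \emph{If $\{z^j\}$ stays bounded}, pass to a subsequence so that $X^j\to X^{*}=(x_1^\infty,z^\infty,0)\in\overline{\Sigma_{\lambda_0}}\cap\partial\mathbb R^{n+1}_+$. For $1\leq\alpha<2$ the weight $y^{1-\alpha}$ is an $A_2$ weight and $w$ enjoys uniform $C^{1,\gamma}$ bounds up to $\{y=0\}$, so $v_{\lambda_j}\to v_{\lambda_0}$ and $\bar v_{\lambda_j}\to\bar v_{\lambda_0}$ in $C^{1}_{loc}(\overline{\mathbb R^{n+1}_+})$ as $\lambda_j\to\lambda_0$. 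Therefore $\bar v_{\lambda_0}(X^{*})=\lim_j\bar v_{\lambda_j}(X^j)\leq 0$, while $\bar v_{\lambda_0}\geq 0$, so $v_{\lambda_0}(X^{*})=0$ and $w_{\lambda_0}(X^{*})=w(X^{*})$. If $x_1^\infty<\lambda_0$ then $X^{*}\in\Sigma_{\lambda_0}\cap\partial\mathbb R^{n+1}_+$ and the Neumann datum of $v_{\lambda_0}$ at $X^{*}$ equals $(x_1^{*,\lambda_0}-x_1^{*})\,w^p(X^{*})>0$; passing to the limit, $\frac{\partial \bar v_{\lambda_j}}{\partial \nu^{\alpha}}(X^j)\to\frac{1}{g(X^{*})}(x_1^{*,\lambda_0}-x_1^{*})w^p(X^{*})>0$, contradicting $\leq 0$. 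If instead $x_1^\infty=\lambda_0$, then $X^{*}\in T_{\lambda_0}\cap\partial\mathbb R^{n+1}_+$ and I would use the remaining hypothesis: from $\partial_{x_1}\bar v_{\lambda_j}(X^j)=0$ and $\partial_{x_1}g<0$ on $\overline{\Sigma_{\lambda_0}}$, the identity $\partial_{x_1}v_{\lambda_j}=(\partial_{x_1}g)\,\bar v_{\lambda_j}+g\,\partial_{x_1}\bar v_{\lambda_j}$ gives $\partial_{x_1}v_{\lambda_j}(X^j)=(\partial_{x_1}g(X^j))\,\bar v_{\lambda_j}(X^j)>0$, so $\partial_{x_1}v_{\lambda_0}(X^{*})\geq 0$; but $v_{\lambda_0}>0$ in $\Sigma_{\lambda_0}$ solves (\ref{cal}), so Lemma \ref{tech} — which holds at every point of $T_{\lambda_0}\cap\partial\mathbb R^{n+1}_+$ by translation invariance in the $x'$ variables — yields $\partial_{x_1}v_{\lambda_0}(X^{*})<0$, a contradiction.

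\emph{If $|z^j|\to\infty$}, the barrier $g$ degenerates along $X^j$ and one must transplant the picture by translation. The functions $w^j$ obtained from $w$ by translating the $(x_2,\dots,x_n)$-variables by $z^j$ solve (\ref{mai1}) with the same sup-bound, so a subsequence converges in $C^{1}_{loc}(\overline{\mathbb R^{n+1}_+})$ to a bounded $\tilde w\geq 0$ solving (\ref{mai1}). Since $v_\mu\geq 0$ for all $\mu<\lambda_0$ and translation preserves this, $\tilde v_\mu:=\tilde w_\mu-\tilde w\geq 0$ for $\mu<\lambda_0$, whence $\tilde v_{\lambda_0}\geq 0$. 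With $\hat P^j:=(x_1^j,0,\dots,0)\to\hat P:=(x_1^\infty,0,\dots,0)$, the inequality $v_{\lambda_j}(X^j)<0$ passes to $\tilde v_{\lambda_0}(\hat P)=0$; moreover $\partial_{x_1}v^j_{\lambda_j}(\hat P^j)=(\partial_{x_1}g(X^j))\,\bar v_{\lambda_j}(X^j)$ with $\partial_{x_1}g(X^j)$ bounded (its logarithmic terms in the escaping directions have vanishing $x_1$-derivative) and $\bar v_{\lambda_j}(X^j)\to 0$ (because $\bar v_{\lambda_j}\to\bar v_{\lambda_0}\geq 0$ uniformly on $\overline{\Sigma_{\lambda_0}}$ while $\bar v_{\lambda_j}(X^j)<0$), so $\partial_{x_1}\tilde v_{\lambda_0}(\hat P)=0$. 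The same dichotomy then applies. If $x_1^\infty<\lambda_0$ and $\tilde w(\hat P)>0$: writing $\frac{\partial \bar v_{\lambda_j}}{\partial \nu^{\alpha}}(X^j)=\frac{w^j(\hat P^j)^p}{g(X^j)}\bigl(x_1^{j,\lambda_j}(w^{j}_{\lambda_j}(\hat P^j)/w^j(\hat P^j))^p-x_1^j\bigr)$ and using $w^{j}_{\lambda_j}(\hat P^j)/w^j(\hat P^j)\to 1$ (a consequence of $\tilde v_{\lambda_0}(\hat P)=0$), the bracket tends to $2(\lambda_0-x_1^\infty)>0$, so $\frac{\partial \bar v_{\lambda_j}}{\partial \nu^{\alpha}}(X^j)>0$ for large $j$, a contradiction. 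If $x_1^\infty=\lambda_0$ and $\tilde v_{\lambda_0}\not\equiv 0$: Lemma \ref{tech} applied to $\tilde w$ contradicts $\partial_{x_1}\tilde v_{\lambda_0}(\hat P)=0$. The residual possibilities $\tilde v_{\lambda_0}\equiv 0$ and $\tilde w(\hat P)=0$ each force $\tilde w\equiv 0$ (by the Neumann-comparison argument, respectively by the boundary Hopf lemma); in that event I would rescale once more, $\hat w^j:=w^j/w^j(\hat P^j)$, which is locally uniformly bounded by the Harnack inequality for the $A_2$-degenerate operator and, after even reflection across $\{y=0\}$, converges to a nonnegative entire solution of $\mathrm{div}(|y|^{1-\alpha}\nabla\hat w)=0$ with $\hat w(\hat P)=1$, hence $\hat w\equiv 1$ by the corresponding Liouville theorem; repeating the sign computation with this normalization closes the case.

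The heart of the matter — and the step I expect to be the main obstacle — is the escape case $|z^j|\to\infty$: because the chosen barrier $g$ blows up there, one cannot argue by direct convergence of $\bar v_{\lambda_j}$, and is instead forced to pass to a translated limiting solution and then separately exclude the degenerate situation in which that limit is trivial. Throughout, $1\leq\alpha<2$ is used essentially: it makes $y^{1-\alpha}$ an $A_2$ weight (giving Harnack, interior and boundary regularity, and the Liouville theorem) and it yields $C^{1}$ regularity of $w$ up to $\{y=0\}$ — without which $\partial_{x_1}w$, which appears throughout and in Lemma \ref{tech}, would not be defined on the boundary, and the limits above could not be taken.
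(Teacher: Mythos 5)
Your overall plan follows the paper closely: assume by contradiction that $\frac{\partial \bar v_{\lambda_j}}{\partial \nu^{\alpha}}(X^j)\leq 0$, deduce $x_1^j\in(0,\lambda_j)$, split according to whether the tangential coordinates $\tilde x^j$ stay bounded or escape, and in the escaping case pass to a translated limit $\tilde w$. Your handling of the bounded sub-case (direct Neumann computation when $x_1^\infty<\lambda_0$, and Lemma~\ref{tech} when $x_1^\infty=\lambda_0$) and of the non-degenerate escaping sub-cases is essentially the paper's argument.

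The genuine gap is in the residual case $\tilde w\equiv 0$. You normalize $\hat w^j:=w^j/w^j(\hat P^j)$, obtain $\hat w^j\to 1$, and then assert that ``repeating the sign computation with this normalization closes the case.'' It does not, as stated: in this regime $x_1^j\to\lambda_0$ and $\lambda_j\to\lambda_0$, so
$$x_1^{j,\lambda_j}\bigl(\hat w^j_{\lambda_j}(\hat P^j)\bigr)^p - x_1^j\bigl(\hat w^j(\hat P^j)\bigr)^p = 2(\lambda_j-x_1^j) + \Bigl[\text{error from } \hat w^j\not\equiv 1\Bigr],$$
and the main term $2(\lambda_j-x_1^j)$ and the error term are a priori of comparable size; $C^0$ convergence $\hat w^j\to 1$ gives no control on the sign of their sum. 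What is actually needed, and what the paper supplies, is a quantitative gradient estimate. The paper invokes Proposition~2.6 of \cite{JLX} to get $|\nabla_x W^j|\leq\epsilon$ on large balls around $(x_1^j,0,0)$, transfers this via Harnack to $|\nabla_x w^j(X)|\leq C\epsilon\, w^j(X)$ there, and then observes that $t\mapsto t\,w^p(t,\tilde x^j,0)$ is strictly increasing on the relevant interval because
$$\frac{\partial}{\partial t}\bigl(t\,w^p\bigr) = w^{p-1}\bigl(w + pt\,\partial_t w\bigr) > 0$$
once $\epsilon$ is chosen small enough (using $0<t\lesssim\lambda_0$). Applying this monotonicity between $x_1^j$ and its reflection $2\lambda_j-x_1^j>x_1^j$ gives $x_1^{j,\lambda_j}w_{\lambda_j}^p(X^j)-x_1^jw^p(X^j)>0$ directly, i.e.\ $\frac{\partial\bar v_{\lambda_j}}{\partial\nu^\alpha}(X^j)>0$, contradicting the assumption. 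Without this quantified gradient bound and the resulting monotonicity of $x_1\mapsto x_1 w^p$, the degenerate case does not close, and this is precisely the step your outline leaves unproved. A secondary, minor remark: your case split in the escaping branch conditions simultaneously on $x_1^\infty$ and on properties of $\tilde w$; it is cleaner (and is what the paper does implicitly) to first dichotomize on $\tilde w\equiv 0$ versus $\tilde w\not\equiv 0$, since in the latter case the strong maximum principle forces $\tilde w>0$ and $\tilde v_{\lambda_0}>0$ at once.
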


\begin{proof}
We verify it by contradiction. If it is not true, then there exist
sequences $\lambda_j\to \lambda_0 \ (\lambda_j>\lambda_0)$, $X^j\in
\Sigma_{\lambda_j}\cap  \partial \mathbb R^{n+1}_+$ and $\bar
v_{\lambda_j}(X^j)$ such that
\begin{equation}\frac{\partial \bar
v_{\lambda_j}}{\partial x_1}(X^j)=0 \quad \mbox{and} \quad \bar
v_{\lambda_j}(X^j)<0, \label{ast}
\end{equation}
but
\begin{equation}
\frac{\partial \bar v_{\lambda_j}}{\partial \nu^{\alpha}}(X^j) \leq
0. \label{ast1}
\end{equation}
Let $x_1^j$ be the first component of $X^j$, and then $x_1^{j,
\lambda_j}=2\lambda_j-x_1^j$. Because of (\ref{ast1}),
$$x_1^{j, \lambda_j}w_{\lambda_j}^p(X^j)-x_1^{j}w^p(X^j)\leq 0.$$
Due to the fact that $\lambda_0\geq 0$, we have $x_1^{j, \lambda_j}\geq
0$. Thus, $x_1^j>0$. Furthermore, $x_1^j\in (0, \ \lambda_j)$. Note
that $ X^j\in \partial\mathbb R^{n+1}_+. $ We may write
$X^j=(x_1^j, \ \tilde{x}^j, \ 0)$, here  $\tilde{x}^j=(x_2^j,\cdots,
x_n^j)$. If $\tilde{x}^j$ is bounded, there exists $X^0$ such that
there is a subsequence (we do not distinguish the sequence and its
 subsequence in the whole paper ) such that $X^j\to X^0 \in
\overline\Sigma_{\lambda_0}\cap
\partial \mathbb R^{n+1}_+$. Also notice that
$$\frac{\partial \bar v_{\lambda_0}}{\partial x_1}(X^0)=\lim\limits_{j\to\infty}\frac{\partial
\bar v_{\lambda_j}}{\partial x_1}(X^j)=0.$$ From Lemma \ref{tech},
we can infer that $X^0\not\in T_{\lambda_0}$ and $\bar
v_{\lambda_0}(X^0)\leq 0$, which contradicts the fact that $$ \bar
v_{\lambda_0}(X^0)>  0 \quad \mbox{in} \ \
\overline\Sigma_{\lambda_0}\backslash T_{\lambda_0}.$$
Without loss of generality, we assume that $\tilde{x}^j$ is
unbounded, that is, $|\tilde{x}^j|\to \infty$ as $j\to\infty$. Let
$$ w^j(X)=w(x_1, \tilde{x}^j+\tilde{x}, y), $$
where $\tilde{x}=(x_2,\cdots, x_n)$. Since $w^j$ is uniformly
bounded, by Corollary 2.1 in \cite{JLX}),
there exists
$\tilde{w}\in H^1_{loc}(y^{1-\alpha}, \mathbb R^{n+1}_+)\cap C^{\beta}_{loc}(
\overline{\mathbb R^{n+1}_+})$ such that
\begin{equation}
\left \{
\begin{array}{lll}
w^j\rightharpoonup \tilde{w}\quad &\mbox{weakly in} \ H^1_{loc}(y^{1-\alpha}, \overline{\mathbb R^{n+1}_+}),
\nonumber \medskip \\
w^j\to \tilde{w} \quad & \mbox{in} \ C^{0, \ \beta}_{loc}(
\overline {R^{n+1}_+}) \nonumber
\end{array}
\right.
\end{equation}
for $\beta >0$, and
$\tilde{w}$ satisfies
\begin{equation}
\left \{
\begin{array}{rll}
- L_\alpha \tilde{w} =& 0 \quad \quad &\mbox{in} \ \mathbb R^{n+1}_+, \medskip\\
\tilde{w}>&0 \quad \quad &\mbox{in} \ \mathbb R^{n+1}_+,
\medskip \\
\frac{\partial\tilde{w}}{\partial \nu^{\alpha}}=&x_1 \tilde{w} \quad
\quad &\mbox{on} \
\partial\mathbb R^{n+1}_+.
\label{why}
\end{array}
\right.
\end{equation}
Next we want to show that $\tilde{w}\equiv 0$ in $\mathbb
R^{n+1}_+$.\\

Let $\tilde v_{\lambda}=\tilde{w}_{\lambda}-\tilde{w}$. Then $\tilde
v_{\lambda}$ satisfies
\begin{equation}
\left \{
\begin{array}{rll}
- L_\alpha \tilde v_{\lambda} =& 0 \quad \quad &\mbox{in} \ \mathbb R^{n+1}_+, \medskip \\
\frac{\partial \tilde v_{\lambda}}{\partial \nu^{\alpha}}=&x_1^\lambda
\tilde{w}_\lambda^p-x_1 \tilde{w}^p \quad \quad &\mbox{on} \
\partial\mathbb R^{n+1}_+.
\label{whyy}
\end{array}
\right.
\end{equation}
By (\ref{back}), we know that $\tilde{v}_{\lambda_0}(X)\geq 0$ in
$\Sigma_{\lambda_0}$. From (\ref{ast}), it follows that
$$\tilde{v}_{\lambda_j}(x^j_1, 0, 0)\leq 0.
$$
Since  $x_1^j\in (0, \ \lambda_j)$, then $x_1^j$ will converge to
some number in $[0 \ \lambda_0]$ as $j\to\infty$. If $x_1^j\to
x^0_1<\lambda_0$, we have $\tilde v_{\lambda_0}(x^0_1, 0, 0)=0$.
However,
\begin{eqnarray}
\frac{\partial \tilde{v}_{\lambda_0}}{\partial \nu^{\alpha}}(x^0_1,
0, 0)&=& x_1^{0, \lambda_0}\tilde{w}_{\lambda_0}^p(x_1^0, 0,
0)-x_1^{0}\tilde{w}^p(x_1^0, 0, 0) \nonumber \medskip \\
&>& x_1^0 (\tilde{w}_{\lambda_0}^p(x_1^0, 0, 0)-\tilde{w}^p(x_1^0,
0, 0)) \nonumber \medskip \\
&=& 0,
\end{eqnarray}
 which contradicts the Hopf lemma in \cite{CaS}. Thus, $x_1^j\to
x^0_1=\lambda_0$ as $j\to\infty$. If $\tilde{w}\not\equiv 0$, then
$\tilde{v}_{\lambda_0}>0$ in $\Sigma_{\lambda_0}$. Since
$$ \frac{\partial v_\lambda}{\partial x_1}=g\frac{\partial \bar v_\lambda}{\partial
x_1}+\frac{\partial g}{\partial x_1}\bar v_\lambda,$$ it follows
from (\ref{ast}) that
\begin{eqnarray}
\frac{\partial \tilde{v}_{\lambda_0}}{\partial x_1}(x^0, 0,
0)&=&\lim\limits_{j\to\infty}\frac{\partial v_{\lambda_j}}{\partial
x_1}(X^j) \nonumber \medskip\\
&=& \lim\limits_{j\to\infty}\frac{\partial \bar
v_{\lambda_j}}{\partial x_1}(X^j)g+\lim\limits_{j\to\infty}\bar v_{\lambda_j}(X^j)\frac{\partial g}{\partial
x_1} \nonumber \medskip \\ &=&
0. \nonumber
\end{eqnarray}
It contradicts Lemma \ref{tech}. Therefore, $\tilde{w}\equiv 0$.

It follows that $w^j(0)\to 0$ as $j\to\infty$. Define
$$W^j(X)=\frac{w^j(X)}{w^j(0)}.$$
By the Harnack inequality in \cite{CaS}, $W^j(X)$ is bounded in
$\bar{\mathbb B}_R^+$ for every $R$. Also $W^j(X)$ satisifies
\begin{equation}
\left \{
\begin{array}{rll}
- L_\alpha W^j =& 0 \quad \quad &\mbox{in} \ \mathbb R^{n+1}_+, \medskip\\
\medskip
\frac{\partial{W^j}}{\partial \nu^{\alpha}}=&x_1(W^j)^{p}(w^j(0))^{p-1} \quad \quad &\mbox{on}
\
\partial\mathbb R^{n+1}_+.
\end{array}
\right.
\end{equation}
By Corollary 2.1 in \cite{JLX} again, it follows that
\begin{equation}
\left \{
\begin{array}{lll}
W^j\rightharpoonup {W}\quad &\mbox{weakly in} \ H^1_{loc}(y^{1-\alpha}, \overline{\mathbb R^{n+1}_+}), \nonumber
\medskip \\
W^j\to {W} \quad & \mbox{in} \ C^{0,\ \beta}_{loc}(
\overline {R^{n+1}_+}) \nonumber
\end{array}
\right.
\end{equation}
for some $\beta>0$ with $W(0)=1$. Furthermore, $W(X)\geq 0$ satisfies
\begin{equation}
\left \{
\begin{array}{rll}
- L_\alpha W =& 0 \quad \quad &\mbox{in} \ \mathbb R^{n+1}_+, \medskip \\
\medskip
\frac{\partial{W}}{\partial \nu^{\alpha}}=&0 \quad \quad &\mbox{on}
\
\partial\mathbb R^{n+1}_+.
\label{kkk}
\end{array}
\right.
\end{equation}
Thanks to the Harnack inequality in \cite{CaS} again, that is,
$$ \sup_{\mathbb B^+_R} W\leq C\inf_{\mathbb B^+_R} W,$$
where $C$ is independent of $R$. Let $$\bar W=W-\inf W.$$  $\bar W$ satisfies the same equation as
(\ref{kkk}). Since $\inf \bar W=0$, then exists a sequence of $X_j\in\partial\mathbb R^{n+1}_+$ such that
$\lim\limits_{j\to \infty}\bar W(X_j)=0$. Thus, for every $\epsilon>0$, there exist some $\bar X\in \partial\mathbb R^{n+1}_+$
such that $ \bar W(\bar X)\leq \epsilon$. By Harnack inequality again,
$$ \sup_{\mathbb B^+_R(\bar X)} \bar W (X)\leq C\epsilon.    $$
Since $C$ is independent of $R$,
$$ \bar W (X)\leq C\epsilon \quad \forall X\in \mathbb R^{n+1}_+. $$
Let $\epsilon\to 0$, then $\bar W\equiv$ consant. Hence $W(X)\equiv 1$. By Proposition 2.6 in \cite{JLX}, it
follows that for every $R$, $\epsilon$ and $X\in \mathbb
B_R^+(x_1^j, 0, 0),$ $$ |\nabla_x W^j(X)|\leq \epsilon, \ \  \mbox{as} \ j \
\mbox{large enough}.$$ The application of Harnack inequality further
implies that
$$ |\nabla_x w^j(X)|\leq \epsilon w^j(0)\leq C\epsilon \inf_{\mathbb
B_R^+(x_1^j, 0, 0)}w^j(X)$$ for $X\in \mathbb B_R^+(x_1^j, 0, 0)$
and large enough $j$, where $C=C(R, \max(w), \lambda_0).$ Therefore,
\begin{equation}
|\nabla_x w(X)|\leq C\epsilon w(X) \quad \mbox{for} \ X= (t,
\tilde{x}^j, 0), \label{alm}
\end{equation}
where $t\in (0, \ \lambda_0+R/2)$ with large $R$.

For $X=(t, \tilde{x}^j, 0)$ and $t\in (0, \ \lambda_0+R/2)$,  from
(\ref{alm}),
\begin{eqnarray}
\frac{\partial (x_1w^p)}{\partial
x_1}&=&w^p+px_1w^{p-1}\frac{\partial w}{\partial x_1}\nonumber \medskip  \\
&=& w^{p-1}(w+px_1\frac{\partial w}{\partial x_1}) \nonumber \medskip  \\
&> &0 \nonumber \medskip
\end{eqnarray}
if $\epsilon$ is sufficiently small. Then, for $j$ large enough,
$$\frac{\partial \bar v_{\lambda_j}}{\partial \nu^{\alpha}}(X^j) =
\frac{1}{g}\cdot (x_1^{j,
\lambda_j}w_{\lambda_j}^p(X^j)-x_1^{j}w^p(X^j))>0
$$
which contradicts (\ref{ast1}). We finally arrive at the conclusion
of Lemma \ref{lemm2}. This completes the proof of Proposition
\ref{pro1}.

\end{proof}

With the help of Proposition \ref{pro1}, we are able to give the
proof of Theorem \ref{th1}. We need to construct some new type of auxiliary function. Unlike the
semilinear Laplacian equation, the construction of auxiliary functions for fractional Laplacian is more involved.
Our auxiliary function is base on the product of the first eigenfunction of Laplacian equation and some Bessel function.

\begin{proof}[Proof of Theorem 1]
We first introduce some test functions.  $\psi$ solves the following equation
\begin{eqnarray}
\psi^{''}+\frac{1-\alpha}{s}\psi'&=&\psi, \nonumber \medskip \\
\psi(0)&=&1, \nonumber \medskip \\
\lim\limits_{s\to \infty} \psi(s)&=&0.
\end{eqnarray}
In fact $\psi$ minimizes the following function
$$ H_\alpha(\psi):=\int_{0}^{\infty} (|\psi(s)|^2+|\psi'(s)|^2)s^{1-\alpha}\,ds
.$$ It is known that $\psi$ is a combination of Bessel function \cite{L} or \cite{BCDS}
and it satisfies the following asymptotic behavior
\begin{equation}
 \psi(s)\sim \left\{
 \begin{array}{lll}
 1-c_1(\alpha)s^\alpha \quad &\mbox{for} \ s\to
0,\nonumber\\
\\
c_2(\alpha)s^{\frac{\alpha-1}{2}}e^{-s} \quad &\mbox{for} \ s\to
\infty,\nonumber
\end{array}
\right.
\end{equation}
where
$$c_1(\alpha)=\frac{2^{1-\alpha}\Gamma(1-\frac{1}{2}\alpha)}{\alpha\Gamma(\frac{1}{2}\alpha)} \quad \mbox{and}
\quad
c_2(\alpha)=\frac{2^{\frac{{1-\alpha}/2}{2}}\pi^{\frac{1}{2}}}{\Gamma(\frac{1}{2}\alpha)}.
$$
Moreover, $$-\lim\limits_{s\to 0^+}s^{1-\alpha}\psi'(s)=k_\alpha.$$ Recall that $k_\alpha=\alpha c_1(\alpha)$.
 Since $\lim\limits_{s\to 0^+}s^{1-\alpha}\psi'(s)<0$, there exists
some $\delta>0$ such that $\psi'(s)<0$ in $(0, \ \delta)$.
Let
$$\phi_\ast(X):=\phi(x)(\psi(\lambda^{\frac{1}{2}}y)-\psi(\lambda^{\frac{1}{2}}\delta)>0$$
and
$$\mathcal{C}_R:=\{(x, y)|(x_1-R)^2+|\tilde{x}|^2<1 \ \ \mbox{and} \ \ 0<y<\delta
\}$$ be the cylindrical domain. Here $\phi(x)$ is the first eigenfunction of the following eigenvalue problem
\begin{equation}
\left \{
\begin{array}{rll}
- \triangle \phi =& \lambda \phi  \quad \quad &\mbox{in} \ \{x| (x_1-R)^2+|\tilde{x}|^2<1\}, \medskip\\
\medskip
 \phi >&0 \quad \quad \quad &\mbox{in} \ \{x| (x_1-R)^2+|\tilde{x}|^2<1\}, \medskip\\
\phi=&0  \quad \quad &\mbox{on} \ \{x| (x_1-R)^2+|\tilde{x}|^2=1\}.
\end{array}
\right.
\end{equation}

It follows that
\begin{equation}
\left \{
\begin{array}{rll}
- L_\alpha \phi_\ast+ \frac{\lambda \phi_\ast \psi(\lambda^{\frac{1}{2}}\delta)}
{\psi(\lambda^{\frac{1}{2}}y)-\psi(\lambda^{\frac{1}{2}}\delta)}
=& 0 \quad \quad &\mbox{in} \ \mathcal{C}_R, \medskip\\
\medskip
 \phi_\ast= &0 \quad \quad &\mbox{on} \  \partial \mathcal{C}_R\backslash\{y=0\}, \medskip\\
\frac{\partial{\phi_\ast}}{\partial
\nu^{\alpha}}=&\frac{k_\alpha\lambda_1^{\frac{\alpha}{2}}\phi_\ast}{1-\psi(\lambda^{\frac{1}{2}}\delta)}\ \quad
\quad &\mbox{on} \  \partial \mathcal{C}_R\cap\{y=0\}.
\end{array}
\right.
\end{equation}
Since $w$ is nondecreasing in $x_1$ direction, from Proposition
\ref{pro1}, we obtain that
$$x_1w^{p-1}(X)\geq (R-1)m_0^{p-1}     $$
for $R>2$ and $X\in \mathcal{C}_R$, where $$m_0=\min_{X\in
\mathcal{C}_1}w(X) \quad \mbox{and} \quad \mathcal{C}_1=\{(x,
y)|(x_1-1)^2+|\tilde{x}|^2<1 \ \ \mbox{and} \ \ 0<y<\delta \}.$$ We can also see that
$w(X)$ satisfies the following

\begin{equation} \left \{
\begin{array}{rll}
- L_\alpha w =& 0 \quad \quad &\mbox{in} \ \mathcal{C}_R, \medskip\\
\medskip
w> &0 \quad \quad &\mbox{on} \  \partial \mathcal{C}_R\backslash\{y=0\}, \medskip \\
\frac{\partial{w}}{\partial \nu^{\alpha}}\geq &(R-1)m_0^{p-1}w
\quad \quad &\mbox{on} \  \partial \mathcal{C}_R\cap\{y=0\}.
\end{array}
\right.
\end{equation}
Set $$\psi_\ast:={\phi_\ast}/{w}>0.$$ Then $\psi_\ast(X)$ satisfies
\begin{equation} \left \{
\begin{array}{lll}
L_\alpha \psi_\ast+2\nabla \psi_\ast\cdot \frac{\nabla w}{w}-\frac{\lambda \psi_\ast \psi(\lambda^{\frac{1}{2}}\delta)}
{\psi(\lambda^{\frac{1}{2}}y)-\psi(\lambda^{\frac{1}{2}}\delta)} = 0 \quad \quad &\mbox{in} \ \mathcal{C}_R,
\medskip \\
\medskip
\psi_\ast= 0 \quad &\mbox{on} \  \partial \mathcal{C}_R\backslash\{y=0\}, \medskip \\
\frac{\partial{\psi_\ast}}{\partial \nu^{\alpha}}=
(\frac{k_\alpha\lambda_1^{\frac{\alpha}{2}}}{{1-\psi(\lambda^{\frac{1}{2}}\delta)}}-(R-1)m_0^{p-1})\psi_\ast
\quad &\mbox{on} \  \partial \mathcal{C}_R\cap\{y=0\}.
\end{array}
\right.
\end{equation}
If we choose $R$ sufficiently large, then
$$(\frac{k_\alpha\lambda_1^{\frac{\alpha}{2}}}{{1-\psi(\lambda^{\frac{1}{2}}\delta)}}
-(R-1)m_0^{p-1})\leq -C_0$$ for
some $C_0>0$. Thus
\begin{equation}
 \frac{\partial{\psi_\ast}}{\partial
\nu^{\alpha}}\leq -C_0\psi_\ast. \label{agu}
\end{equation}
By maximum principle, the maximum value value of $\psi_\ast$ should
be attained at some point on $\partial \mathcal{C}_R\cap\{y=0\}$.
Then $\frac{\partial{\psi_\ast}}{\partial \nu^{\alpha}}\geq 0$ at
that point. Obviously, it is a contradiction with (\ref{agu}). Hence
$\psi_\ast\equiv 0$ in $\mathcal{C}_R$. Then $\phi_\ast\equiv 0$,
which contradicts the construction of $\phi_\ast$. Therefore
$w\equiv 0$ in $\mathbb R^{n+1}_+$. We complete the proof of Theorem \ref{th1}.

\end{proof}

\section{ A priori estimates}
We apply the blow-up argument in \cite{GS} to obtain the a priori estimates. It reduces the a priori
bound to the results of Liouville theorems. We first recall two classical Liouville theorems for fractional
Laplacian in \cite{BCDS}.
\begin{lemma}
Let $1\leq \alpha<2$ and $1<p<\frac{n+\alpha}{n-\alpha}$. Then the
problem
\begin{equation}
\left \{
\begin{array}{rll}
- div(y^{1-\alpha}\nabla w) =& 0 \quad \quad &\mbox{in} \ \mathbb R^{n+1}_+,\medskip \\
\medskip
{w}>&0 \quad \quad &\mbox{in} \ \mathbb R^{n+1}_+,
\medskip \\
\frac{\partial {w}}{\partial \nu^{\alpha}}=& {w}^p \quad \quad
&\mbox{on} \
\partial\mathbb R^{n+1}_+
\end{array}
\right.
\end{equation}
has no bounded solution. \label{whol}
\end{lemma}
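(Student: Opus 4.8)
The plan is to reduce the assertion to the classical Gidas--Spruck-type Liouville theorem for the nonlocal equation $(-\lap)^{\frac{\alpha}{2}}u=u^p$ and then exploit a Kelvin transform together with the method of moving planes; this is carried out in \cite{BCDS}, and below I indicate the route I would follow. First, by the Caffarelli--Silvestre correspondence \cite{CS} a bounded positive solution $w$ of the stated extension problem is the same as a bounded positive solution $u:=w(\cdot,0)$ of $(-\lap)^{\frac{\alpha}{2}}u=u^p$ in $\mathbb R^n$. I would then invoke the regularity theory for the degenerate operator $\mathrm{div}(y^{1-\alpha}\nabla\cdot)$ (the weight lies in the Muckenhoupt class $A_2$): by Corollary~2.1 and Proposition~2.6 of \cite{JLX} together with \cite{CaS}, such a $w$ is smooth in $\mathbb R^{n+1}_+$ and Hölder continuous up to $\{y=0\}$, so that the maximum principle, Hopf lemma and moving-plane machinery for $L_\alpha$ already used in Proposition~\ref{pro1} are at our disposal. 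It therefore suffices to rule out bounded positive solutions of $(-\lap)^{\frac{\alpha}{2}}u=u^p$ on $\mathbb R^n$.

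Next I would fix an arbitrary point $a\in\mathbb R^n$, note that $u(\cdot+a)$ solves the same translation-invariant equation, and apply the Kelvin transform about the origin to it: set $\bar u(x)=|x|^{\alpha-n}\,u\!\left(a+\tfrac{x}{|x|^2}\right)$, or, at the level of the extension, transform $w$ by $Z(X)=|X|^{\alpha-n}w\!\left(a+\tfrac{X}{|X|^2}\right)$ and use the identity (\ref{matt}). A direct computation using the conformal covariance of $(-\lap)^{\frac{\alpha}{2}}$ then yields
\[
(-\lap)^{\frac{\alpha}{2}}\bar u=|x|^{-\tau}\,\bar u^{\,p}\quad\text{in }\mathbb R^n\setminus\{0\},\qquad \tau:=(n+\alpha)-(n-\alpha)p ,
\]
and the subcriticality $p<\frac{n+\alpha}{n-\alpha}$ is precisely what makes $\tau>0$; moreover $\bar u(x)=O(|x|^{\alpha-n})$ as $|x|\to\infty$, with at worst an $O(|x|^{\alpha-n})$ singularity at the origin.

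Then I would run the method of moving planes for $\bar u$ (or for $Z$, via the maximum and Hopf principles for $L_\alpha$ from \cite{CaS}, which is where the restriction $1\le\alpha<2$ enters, as in Proposition~\ref{pro1}) in an arbitrary direction $e\in S^{n-1}$. The decay $\bar u=O(|x|^{\alpha-n})$ furnishes a starting hyperplane far out; and since $|x|^{-\tau}$ is a nonincreasing function of $|x|$, the reflection $x^\lambda$ of $x$ across $\{x\cdot e=\lambda\}$ with $\lambda\le0$ obeys $|x^\lambda|^{-\tau}\ge|x|^{-\tau}$ on $\{x\cdot e<\lambda\}$---the sign that makes $(-\lap)^{\frac{\alpha}{2}}(\bar u_\lambda-\bar u)\ge0$ wherever $\bar u_\lambda\ge\bar u\ge0$---so the plane can be advanced for every $\lambda\le0$; running the mirror procedure in the direction $-e$ for $\lambda\ge0$ and letting $\lambda\to0^{\pm}$ forces $\bar u$ to be symmetric about $\{x\cdot e=0\}$. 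As $e$ is arbitrary, $\bar u$, hence $u(\cdot+a)$, is radially symmetric about the origin, i.e.\ $u$ is radially symmetric about $a$; since $a$ was arbitrary, $u$ is constant, and the only nonnegative constant solving $(-\lap)^{\frac{\alpha}{2}}u=u^p$ is $u\equiv0$, contradicting $u>0$.

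I expect the hard part to be the moving-plane step: initiating the procedure in the nonlocal setting from the mere $O(|x|^{\alpha-n})$ tail (one must control the far-field contribution to $(-\lap)^{\frac{\alpha}{2}}$, typically through an antisymmetric maximum principle for $\bar u_\lambda-\bar u$), and correctly absorbing the possible singularity of $\bar u$ and of the coefficient $|x|^{-\tau}$ at the origin as the plane sweeps past it---this is the technical core of \cite{BCDS}. A Pohozaev/test-function alternative (multiply the extension equation by $w^{-s}\zeta_R^m$ with $s\in(0,1)$ and a cutoff $\zeta_R$, integrate against $y^{1-\alpha}\,dx\,dy$, use the Neumann condition to produce $\int_{\mathbb R^n}w^{p-s}\zeta_R^m$, then the weighted Sobolev trace inequality and Young's inequality to reach $\int_{B_R}u^{p-s}\le CR^{\theta}$ and send $R\to\infty$) avoids moving planes but, as in the local case, only covers a subrange of exponents, so the Kelvin--moving-plane argument seems unavoidable for $p$ near $\frac{n+\alpha}{n-\alpha}$.
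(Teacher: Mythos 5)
The paper offers no proof of Lemma~\ref{whol}: the authors explicitly ``recall'' it as a classical fractional Liouville theorem from \cite{BCDS}, so there is nothing in the text to compare your argument against. That said, your sketch is a correct outline of the standard route, which is essentially what the cited reference carries out: reduce through the Caffarelli--Silvestre extension to $(-\lap)^{\alpha/2}u=u^p$ on $\mathbb R^n$, Kelvin-invert about an arbitrary center $a$, observe that subcriticality makes $\tau=(n+\alpha)-(n-\alpha)p>0$ so that $|x|^{-\tau}$ is radially nonincreasing and thus cooperates with reflection across every hyperplane whose favored half-space contains the origin, and conclude radial symmetry of $u$ about every $a$, hence constancy, hence $u\equiv 0$. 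The technical points you flag --- getting the plane started from the $O(|X|^{\alpha-n})$ tail, controlling the reflected singularity at $0^\lambda=2\lambda e$ as $\lambda\to 0^-$, and the Cabr\'e--Sire maximum and Hopf principles for $L_\alpha$ from \cite{CaS} --- are exactly the ones that have to be carried out, and they are where the hypothesis $1\le\alpha<2$ enters the extension-based proof (the underlying nonlocal Liouville theorem is true for all $0<\alpha<2$). One small imprecision to fix: the maximum-principle step should be phrased as showing that the set $\{\bar u_\lambda<\bar u\}\cap\Sigma_\lambda$ is empty, by testing the equation for $\bar u_\lambda-\bar u$ there and deriving a contradiction with its infimum; it is not useful, and not what one needs, to assert $(-\lap)^{\alpha/2}(\bar u_\lambda-\bar u)\ge 0$ on the region where $\bar u_\lambda\ge\bar u$ already holds.
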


Let
$$\mathbb R^{n+1}_{++}:=\{ X=(x', x_n, y)| x_n>0, \ y>0\}.$$
\begin{lemma}
Let $1\leq \alpha<2$ and $1<p<\frac{n+\alpha}{n-\alpha}$. Then the
problem
\begin{equation}
\left \{
\begin{array}{rll}
- div(y^{1-\alpha}\nabla w) =& 0 \quad \quad &\mbox{in} \ \mathbb R^{n+1}_{++},\medskip\\
\medskip
\frac{\partial {w}}{\partial \nu^{\alpha}}=&{w^p(x', \ x_n, 0)} \quad &\mbox{on} \ \{X|y=0\}, \medskip\\
 w(x', 0, y)=& 0 \quad &\mbox{on} \ \{X|x_n=0\}
\end{array}
\right.
\end{equation}
has no positive bounded solution. \label{half}
\end{lemma}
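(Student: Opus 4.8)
The plan is to prove Lemma \ref{half} by the method of moving planes in the direction $x_n$ normal to the Dirichlet face $\{x_n=0\}$: first show that every positive bounded solution $w$ is monotone nondecreasing in $x_n$, then translate $w$ to $x_n=+\infty$ to produce a positive bounded solution of the whole-space problem of Lemma \ref{whol}, which is impossible.

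So suppose $w>0$ is a bounded solution. By the strong maximum principle and Hopf lemma of \cite{CaS}, $w>0$ on $\overline{\mathbb R^{n+1}_{++}}\setminus\{x_n=0\}$. For $\lambda>0$ put $\Sigma_\lambda=\{X\in\mathbb R^{n+1}_{++}:x_n<\lambda\}$, $T_\lambda=\{x_n=\lambda\}$, $X^\lambda=(x',2\lambda-x_n,y)$, $w_\lambda(X)=w(X^\lambda)$ and $v_\lambda=w_\lambda-w$, so that $L_\alpha v_\lambda=0$ in $\Sigma_\lambda$, $\partial v_\lambda/\partial\nu^\alpha=w_\lambda^p-w^p$ on $\{y=0\}\cap\overline{\Sigma_\lambda}$, $v_\lambda=0$ on $T_\lambda$, and $v_\lambda=w(x',2\lambda,y)>0$ on the Dirichlet face $\{x_n=0,\,y>0\}$ because $w$ vanishes there. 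The essential structural point is that $w_\lambda^p-w^p$ has the same sign as $v_\lambda$, so there is no sign obstruction, only the unboundedness of $\Sigma_\lambda$ in $x'$ and $y$.

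The moving plane argument then proceeds in the familiar two steps, implemented with the auxiliary functions and compactness devices of Section 2 since $w$ is only assumed bounded. To start the plane, one uses that $\|w\|_{L^\infty(\Sigma_\lambda)}\to0$ as $\lambda\to0^+$: writing $w_\lambda^p-w^p=c_\lambda(X)v_\lambda$ with $0\le c_\lambda=o(1)$, one divides $v_\lambda$ by a positive auxiliary function that grows at spatial infinity (forcing the quotient to vanish at infinity) and is chosen so that the maximum principle, the Hopf lemma and the sign of the resulting Robin coefficient on $\{y=0\}$ together exclude a negative infimum of the quotient — exactly as the Bessel-type function $\phi_\ast$ is used in the proof of Theorem \ref{th1} and the logarithmic function is used in Step 1 of Proposition \ref{pro1} — with the edge $\{x_n=0=y\}$ handled by a corner-point lemma of the type of Lemma \ref{tech}. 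This gives $v_\lambda\ge0$ for all small $\lambda>0$. Setting $\lambda_0=\sup\{\lambda:v_\mu\ge0\text{ in }\Sigma_\mu\ \forall\,\mu\le\lambda\}$, one shows $\lambda_0=+\infty$: if $\lambda_0<\infty$, then $v_{\lambda_0}>0$ in $\Sigma_{\lambda_0}$ by the maximum principle, and the plane can be pushed further unless a negative infimum of the corresponding quotient (formed from $v_{\lambda_j}$, $\lambda_j\downarrow\lambda_0$) escapes to infinity in $x'$; this is ruled out precisely as in the proofs of Proposition \ref{pro1} and Lemma \ref{lemm2}, by translating in $x'$, extracting a limiting solution via the compactness of \cite{JLX}, and combining the Harnack inequality with the corner-point lemma. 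Hence $w$ is nondecreasing in $x_n$.

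Finally, by monotonicity and boundedness the translates $w^{(k)}(X)=w(x',x_n+k,y)$ converge locally, via the estimates and compactness of \cite{JLX}, to $W(x',y):=\lim_{x_n\to\infty}w(x',x_n,y)$, which is independent of $x_n$, satisfies $0<w(x',1,y)\le W\le\|w\|_\infty$, belongs to $H^1_{loc}(y^{1-\alpha},\overline{\mathbb R^{n+1}_+})$, and solves $-\operatorname{div}(y^{1-\alpha}\nabla W)=0$ in $\mathbb R^{n+1}_+$ with $\partial W/\partial\nu^\alpha=W^p$ on $\partial\mathbb R^{n+1}_+$, the Dirichlet face having receded to $x_n=-\infty$. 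This contradicts Lemma \ref{whol}, and the lemma follows. The main obstacle is Step 1: running the moving plane method for a merely bounded, non-decaying solution with mixed boundary conditions requires the tailored auxiliary functions and the translation–compactness–Harnack mechanism developed in Section 2, and the corner $\{x_n=0=y\}$ forces an analogue of Lemma \ref{tech}.
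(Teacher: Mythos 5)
The paper does not prove this lemma at all: Lemma \ref{half} (together with Lemma \ref{whol}) is introduced with the sentence ``We first recall two classical Liouville theorems for fractional Laplacian in \cite{BCDS}'' and is simply cited from that reference. There is therefore no in-paper argument to compare yours against; your sketch is an independent attempt to supply a proof the authors chose to quote from the literature.

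As an independent sketch it has the right architecture (moving planes in $x_n$, monotonicity in $x_n$, translation to $x_n=+\infty$, appeal to Lemma \ref{whol}), but there is a genuine gap in Step~1, the start of the plane. You claim Step~1 can be run ``exactly as'' Step~1 of Proposition \ref{pro1} and the $\phi_\ast$-argument of Theorem \ref{th1}, but neither of those mechanisms applies here. In Step~1 of Proposition \ref{pro1} the plane starts at $\lambda\le 0$ and the contradiction at a negative boundary minimum of $\bar v_\lambda=v_\lambda/g$ is produced by the sign of the \emph{indefinite} coefficient: with $x_1^0<0$ and $v_\lambda(X^0)<0$ one gets $x_1^{0,\lambda}w_\lambda^p-x_1^0w^p>0$, directly contradicting $\partial\bar v_\lambda/\partial\nu^\alpha\le 0$; the logarithmic $g$ has $\partial g/\partial\nu^\alpha=0$, contributes no boundary term at all, and is there only to force decay of the quotient at spatial infinity. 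In your problem the boundary term is $w_\lambda^p-w^p=c_\lambda v_\lambda$ with $c_\lambda\ge 0$, which has the \emph{same} sign as $v_\lambda$; at a putative negative boundary minimum of $v_\lambda/g$ the computation gives $\partial\bar v_\lambda/\partial\nu^\alpha=c_\lambda\bar v_\lambda\le 0$, perfectly consistent with the Hopf-type inequality, so no contradiction appears and the smallness of $c_\lambda$ coming from $\|w\|_{L^\infty(\Sigma_\lambda)}\to 0$ goes unused, precisely because $g$ has zero Robin derivative. To exploit that smallness you need an auxiliary function with $\partial g/\partial\nu^\alpha$ bounded below by a positive multiple of $g$; the Bessel comparison function $\phi_\ast$ of Theorem \ref{th1} has that Robin positivity but vanishes on the lateral boundary and decays rather than grows in $y$, so it does not control the infimum of the quotient at infinity. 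A single barrier doing both jobs simultaneously --- or, alternatively, a replacement of Step~1 by a translation--compactness argument in the spirit of Lemma \ref{lemm2} rather than a single global comparison function --- is where the substantive work lies, and it is not supplied by pointing back to Section~2.
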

Our Liouville theorem (i.e. Theorem \ref{th1}) is also essential in performing the blow-up argument
for (\ref{frac}). In order to get the a priori bound for (\ref{frac}), we shall consider Caffarelli-Silvestre's
extension, that is,
\begin{equation}
\left \{
\begin{array}{rll}
-div(y^{1-\alpha}\nabla w)=&0 \quad \quad &\mbox{in} \ C_{\Omega}, \medskip\\
w(x,y)=&0 \quad \quad &\mbox{on} \ \partial_LC_\Omega, \medskip \\
\frac{\partial w}{\partial \nu^\alpha}= &a(x)g(w) \quad \quad
&\mbox{on} \ \Omega\times \{0\}. \label{prio}
\end{array}
\right.
\end{equation}
Here $w(x,0)=u$ on $\Omega$, If one obtains the a priori bound for (\ref{prio}), then one proves
Theorem \ref{th2}. We shall prove the following proposition. The proof is an adaption of the argument
in \cite{GS} and \cite{BCN}.

\begin{proposition}
Assume that $1\leq \alpha<2$ and $1<p<\frac{n+\alpha}{n-\alpha}$.
Then there exists a generic constant $C$ such that  every solution
of (\ref{prio}) satisfies $$ \|w\|_\infty\leq C .$$
\end{proposition}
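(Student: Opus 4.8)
The plan is to run the standard Gidas–Spruck blow-up and rescaling argument adapted to the Caffarelli–Silvestre extension, reducing a hypothetical unbounded sequence of solutions to one of the three Liouville theorems already available: Lemma \ref{whol} (the half-space problem in $\mathbb{R}^{n+1}_+$), Lemma \ref{half} (the quarter-space problem in $\mathbb{R}^{n+1}_{++}$), and Theorem \ref{th1} (the indefinite problem with weight $x_1$ on the boundary). Argue by contradiction: suppose there is a sequence $w_k$ of solutions of (\ref{prio}) with $M_k := \max_{\overline{C_\Omega}} w_k = w_k(x_k,0) \to \infty$. Since $w_k$ is harmonic for $L_\alpha$ away from $\{y=0\}$ and $w_k=0$ on $\partial_L C_\Omega$, the maximum is attained on $\Omega\times\{0\}$, so $x_k\in\overline{\Omega}$; passing to a subsequence, $x_k\to x_0\in\overline{\Omega}$. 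Set $\mu_k = M_k^{-(p-1)/\alpha}\to 0$ and define the rescaled functions
\[
v_k(X) = \frac{1}{M_k}\, w_k\bigl(x_k + \mu_k x,\; \mu_k y\bigr),
\]
which satisfy $0\le v_k\le 1$, $v_k(0,0)=1$, $-L_\alpha v_k = 0$ in the rescaled domain, and on the flat boundary $\tfrac{\partial v_k}{\partial\nu^\alpha} = a(x_k+\mu_k x)\,\mu_k^\alpha M_k^{-1} g(M_k v_k)$. Because $g(s)/s^p\to 1$, the right-hand side converges to $a(x_0)\,v^p$ (or, when $a(x_0)=0$, to a linear-in-$x_1$ coefficient times $v^p$ after an additional rescaling — see below).

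The next step is the case analysis according to the location of $x_0$ and the sign of $a(x_0)$. First I would handle the three ``easy'' cases. (i) If $x_0\in\Omega$ with $a(x_0)>0$: the rescaled domains exhaust $\mathbb{R}^{n+1}_+$, and by the compactness/regularity results (Corollary 2.1 and Proposition 2.6 in \cite{JLX}) $v_k\to v$ in $C^{0,\beta}_{loc}(\overline{\mathbb{R}^{n+1}_+})$ with $v$ a bounded nonnegative solution of $-L_\alpha v=0$, $\tfrac{\partial v}{\partial\nu^\alpha}=a(x_0)v^p$, and $v(0,0)=1$; rescaling out the constant $a(x_0)>0$ contradicts Lemma \ref{whol}. (ii) If $x_0\in\partial\Omega$: after flattening the boundary near $x_0$ and using $w_k=0$ on $\partial_L C_\Omega$, the limit lives on the quarter-space $\mathbb{R}^{n+1}_{++}$ with the Dirichlet condition on $\{x_n=0\}$, contradicting Lemma \ref{half} (the distance $\mathrm{dist}(x_k,\partial\Omega)/\mu_k$ either stays bounded, giving this case, or tends to $\infty$, reverting to case (i)). (iii) If $x_0\in\Omega$ with $a(x_0)<0$: then the boundary flux has the wrong sign, $\tfrac{\partial v}{\partial\nu^\alpha} = -|a(x_0)|v^p\le 0$; combined with $-L_\alpha v=0$, the maximum principle/Harnack argument (exactly as in the proof of Theorem \ref{th1}, or directly: a bounded nonnegative $L_\alpha$-harmonic function on $\mathbb{R}^{n+1}_+$ with nonpositive Neumann flux that attains its max $1$ at an interior-to-the-boundary point) forces $v\equiv$ const, then the flux condition forces $v\equiv 0$, contradicting $v(0,0)=1$.

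The remaining, and genuinely delicate, case is $x_0\in\Gamma = \overline{\Omega^+}\cap\overline{\Omega^-}$, where $a(x_0)=0$ but $\nabla a(x_0)\ne 0$: this is precisely where Theorem \ref{th1} is needed. Here the naive rescaling kills the nonlinear term because $a(x_k+\mu_k x)\mu_k^\alpha M_k^{-1}\to 0$ too fast; one must instead choose the scaling so that the first-order Taylor expansion $a(x_k+\mu_k x)\approx a(x_k) + \mu_k\nabla a(x_k)\cdot x$ survives. Concretely, rotate coordinates so that $\nabla a(x_0)$ points in the $x_1$-direction, and pick $\mu_k$ (now a different power of $M_k$, namely $\mu_k = M_k^{-(p-1)/(\alpha+1)}$) together with a recentering that absorbs $a(x_k)$, so that the rescaled flux coefficient converges to $c\,x_1$ for a positive constant $c$ that can be normalized to $1$; the limit $v$ is then a bounded nonnegative solution of $-L_\alpha v=0$ in $\mathbb{R}^{n+1}_+$ with $\tfrac{\partial v}{\partial\nu^\alpha}=x_1 v^p$ on the boundary, and Theorem \ref{th1} gives $v\equiv 0$, contradicting $v(0,0)=1$. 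The main obstacle is bookkeeping the correct choice of scaling exponent and the recentering at $\Gamma$, and verifying that after rescaling the limiting problem is exactly (\ref{mai1}) with a genuinely bounded limit — in particular that the condition $1<p<\frac{n+\alpha}{n-\alpha}$ is (only) what is needed to keep the other three cases from producing a nontrivial limit, while the $\Gamma$ case needs no upper bound on $p$, matching the scope of Theorem \ref{th1}. Once all four cases yield contradictions, no such sequence $w_k$ exists, so $\|w\|_\infty\le C$ uniformly, proving the proposition and hence Theorem \ref{th2}.
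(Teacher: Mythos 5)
Your proposal follows essentially the same blow-up strategy as the paper: rescale around the maximum point, pass to a limit via the compactness and regularity results of \cite{JLX}, and exclude each possible location of the blow-up point using Lemma~\ref{whol} (interior point with $a(x^0)>0$), Lemma~\ref{half} (boundary point), or Theorem~\ref{th1} (point on $\Gamma$). The first three cases are handled as you describe, and the paper indeed disposes of the sign issue with a one-line Hopf-type observation that the maximum principle forces $a(x^0)>0$. You also correctly identify $\mu_k = M_k^{-(p-1)/(\alpha+1)}$ as the natural scale when $x^0\in\Gamma$.

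There is, however, a genuine gap in your treatment of the $\Gamma$ case. With $\mu_k = M_k^{-(p-1)/(\alpha+1)}$, Taylor expanding the coefficient in the rescaled boundary condition gives
\begin{equation*}
a(x_k+\mu_k x)\,\mu_k^\alpha M_k^{p-1}\,v_k^p \;=\; \Bigl(\tfrac{a(x_k)}{\mu_k} + \nabla a(x_k)\cdot x + O(\mu_k |x|^2)\Bigr)v_k^p,
\end{equation*}
and $a(x_k)/\mu_k$ is of order $\delta_j/\mu_k$, where $\delta_j = \operatorname{dist}(x_k,\Gamma)$. When $\delta_j/\mu_k$ stays bounded, your ``recentering'' (a translation in $x_1$) absorbs the constant and the limit is exactly the indefinite problem (\ref{usef}); that part works. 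But when $\delta_j/\mu_k\to\infty$, this constant term diverges: translating the center onto $\Gamma$ would push the normalization point $v_k(0)=1$ off to infinity (leaving a possibly trivial limit), while keeping the center at $x_k$ leaves an unbounded coefficient. A single scaling exponent plus recentering therefore does not cover this regime. The paper handles it by splitting the $\Gamma$ case into three subcases according to $\delta_j/M_j^{-(p-1)/(1+\alpha)}$ and, in the divergent subcase, switching to the different scaling $\lambda_j = \delta_j^{-1/\alpha}M_j^{-(p-1)/\alpha}$; with this choice $\delta_j\lambda_j^\alpha M_j^{p-1}\to 1$ and $\lambda_j^{1+\alpha}M_j^{p-1}\to 0$, so the linear term dies and the limit problem has the constant coefficient $\pm|\nabla a(x^0)|$, reverting to Lemma~\ref{whol} (or the Hopf argument when the sign is negative). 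Without this subcase split and the second scaling, your argument does not close in the $\Gamma$ case.
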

\begin{proof}

We prove it by contradiction. Suppose the conclusion in the proposition
is false. Then there exists a sequence of $\{w_j\}$ such that
$$ M_j=\|w_j\|_\infty\to \infty, \quad \mbox{as}\ j\to\infty.$$
By the maximum principle, there exists $(x^j, 0)$ such that
$M_j=w_j(x^j, 0)$. Let
$$ z=\frac{x-x^j}{\lambda_j}.$$
The positive scale factor $\lambda_j$ will be determined later with
$\lambda_j\to 0$ as $j\to\infty$. We introduce the rescaled function
$$V_j(z,\,y):=\frac{w_j(x^j+\lambda_jz, \lambda_j y)}{M_j}.$$
Let $$\Omega_j=\frac{1}{\lambda_j}(\Omega-x^j).$$ We can easily see
that
$$ \max V_j=V_j(0)=1. $$ A direct calculation shows that $V_j$
satisfies

\begin{equation}
\left \{
\begin{array}{rll}
-div(y^{1-\alpha}\nabla V_j)=&0 \quad \quad &\mbox{in} \ C_{\Omega_j},  \medskip \\
V_j=&0 \quad \quad &\mbox{on} \ \partial_LC_{\Omega_j},  \medskip \\
\frac{\partial V_j}{\partial \nu^\alpha}= &M_j^{-1}\lambda_j^\alpha
a(x^j+\lambda_j z)g(M_jV_j) \quad \quad &\mbox{on} \ \Omega_j\times
\{0\}.\label{limit}
\end{array}
\right.
\end{equation}
Since $x^j$ is bounded in $\Omega$, then $x^j\to x^0\in \bar \Omega$ as
$j\to\infty$. There are several cases for the location of the limit
point $x^0$.
Namely, \\
\indent\emph{Case 1:} $x^0\in \Omega^+\cap\Omega^-.$ \medskip

\indent\emph{Case 2:} $x^0\in \partial\Omega.$ \medskip

\indent\emph{Case 3:} $x^0 \in \Gamma$. \\
If Case 1 occurs, set
$$\lambda_j={M_j}^{\frac{1-p}{\alpha}}.$$
Let $d_j=dist\{x^j, \ \partial\Omega\}$. Since $x^0\in\Omega$, then
${d_j}/{\lambda_j}\to\infty$ as $j\to\infty$. The fact that $\mathbb
B_{{d_j}/{\lambda_j}}\subset \Omega_j$ implies that $\Omega_j\to \mathbb
R^n$ as $j\to\infty$. By regularity estimates as in the proof of Theorem 1, we have
\begin{equation}
\left \{
\begin{array}{lll}
V_j\rightharpoonup {V}\quad &\mbox{weakly in} \ H^1_{loc}(y^{1-\alpha}, \overline{\mathbb R^{n+1}_+}), \nonumber
\medskip \\
V_j\to {V} \quad & \mbox{in} \ C^{0,\ \beta}_{loc}(
\overline {R^{n+1}_+}) \nonumber
\end{array}
\right.
\end{equation}
for some $\beta>0$ and the equation (\ref{limit}) will turn into be a limit
equation
\begin{equation}
\left \{
\begin{array}{rll}
- div(y^{1-\alpha}\nabla V) =& 0 \quad \quad &\mbox{in} \ \mathbb R^{n+1}_+, \medskip\\
\medskip
{V}\geq &0 \quad \quad &\mbox{in} \ \mathbb R^{n+1}_+,
\medskip \\
\frac{\partial {V}}{\partial \nu^{\alpha}}=&a(x^0) {V}^p \quad \quad
&\mbox{on} \
\partial\mathbb R^{n+1}_+.
\end{array}
\right.
\end{equation}
In above, we have used the assumption (\ref{ggg}). The maximum principle implies that $a(x^0)>0$. We also have
$V(0)=1$. However, $V\equiv 0$ from Lemma \ref{whol}. A contradiction is arrived.

If  Case 2 occurs, then $d_j\to0$ as $j\to\infty$. We choose the
same $\lambda_j$ as case 1. We have two subcases for the ratio of
${d_j}/{\lambda_j}$, that is,\\
\indent\emph{Case
(a):} ${d_j}/{\lambda_j}\to \delta_0\geq 0$ for a subsequence. \medskip

\indent\emph{Case (b):} ${d_j}/{\lambda_j}\to \infty$  for a subsequence.

In case (a), after a limit procedure, the domain $\Omega_j$ converges
to (up to a rotation) some half space $H_{\delta_0}:=\{x\in \mathbb
R^n| x_n\geq -\delta_0\}$. We obtain that $V$ is a nonnegative solution of
\begin{equation}
\left \{
\begin{array}{rll}
- div(y^{1-\alpha}\nabla V) =& 0 \quad \quad &\mbox{in} \  H_{\delta_0}\times (0,\ \infty), \medskip\\
\medskip
{V}=&0 \quad \quad &\mbox{on} \ \partial H_{\delta_0}\times (0,\
\infty),
\medskip \\
\frac{\partial {V}}{\partial \nu^{\alpha}}=&a(x^0) {V}^p \quad \quad
&\mbox{on} \
 H_{\delta_0}\times \{0\}
\end{array}
\right.
\end{equation}
where $V(0)=1$. We can also see that $a(x^0)>0$. By a translation,
we can infer that  $V\equiv 0$ from Lemma \ref{half}. Clearly, it is
a contradiction. In case (b), if we carry out the same procedure as
case 1, we will also  arrive at a contradiction. We only need to
take care of Case 3.

If case 3 occurs, set
$$ \delta_j:= dist(x^j, \ \Gamma)=|x^j-z^j|, \quad z^j\in \Gamma.$$
Then $\delta_j\to 0$ as $j\to\infty$. Since $\nabla a\not =0$ on
$\Gamma$, it follows that $\delta_j$ is given by
\begin{equation}
\delta_j=\left\{ \begin{array}{lll} \frac{\nabla a(z^j)}{|\nabla
a(z^j)|}(x^j-z^j),\quad x^j\in \Omega^+ \nonumber, \medskip\\
 \nonumber\\
-\frac{\nabla a(z^j)}{|\nabla a(z^j)|}(x^j-z^j),\quad x^j\in
\Omega^-. \nonumber
\end{array}
\right.
\end{equation}
Since $a(z^j)=0$ and $a(x)\in C^2(\bar \Omega)$, by Taylor expansion, we
have
$$ a(x^j+\lambda_jz)=\pm |\nabla a(z^j)|\delta_j+\lambda_j\nabla a(z^j)\cdot z +O(\lambda_j^2| z|^2+\delta_j^2).$$
Substituting this identity into (\ref{limit}) yields that
\begin{equation}
\left \{
\begin{array}{lll}
-div(y^{1-\alpha}\nabla V_j)=0  \quad &\mbox{in} \ C_{\Omega_j}, \medskip \\
V_j=0  \quad &\mbox{on} \ \partial_LC_{\Omega_j}, \medskip \\
\vspace{1.5mm} \frac{\partial V_j}{\partial \nu^\alpha}=
M_j^{p-1}\lambda_j^\alpha (\pm |\nabla
a(z^j)|\delta_j+\lambda_j\nabla a(z^j)\cdot
z+O(\lambda_j^2|z|^2+\delta_j^2)){g(M_jV_j)}/{M_j^p}  \quad
&\mbox{on} \ \Omega_j\times \{0\}. \label{equ}
\end{array}
\right.
\end{equation}
 Observe that the third equation of (\ref{equ}) on $\Omega_j\times \{0\}$
holds in the ball
$$ |z|\leq \frac{1}{3\lambda_j}dist (x^0,\ \partial\Omega) $$
for large $j$. There are several subcases to consider.\\

\indent\emph{Case a:} ${\delta_j}/{M_j^{\frac{1-p}{1+\alpha}}}\to 0$
for a
subsequence. \\

We choose $\lambda_j=M_j^{\frac{1-p}{1+\alpha}}$. Note that
$\Omega_j$ tends to $\mathbb R^n$,
$$\delta_j\lambda_j^\alpha M_j^{p-1}\to 0 $$ and
$$O(\lambda_j^2|z|^2+\delta_j^2)\lambda_j^\alpha M_j^{p-1}\to 0 $$
for fixed $z$ as $j\to\infty$. By regularity estimates, $V_j\to V$
in $C^{0, \ \beta}_{loc}(\overline {R^{n+1}_+})$ and $V$ satisfies
\begin{equation}
\left \{
\begin{array}{rll}
- div(y^{1-\alpha}\nabla V) =& 0 \quad \quad &\mbox{in} \ \mathbb R^{n+1}_+,\medskip \\
\medskip
{V}\geq &0 \quad \quad &\mbox{in} \ \mathbb R^{n+1}_+,
\medskip \\
\frac{\partial {V}}{\partial \nu^{\alpha}}=&\nabla a(x^0)\cdot z
{V}^p \quad \quad &\mbox{on} \
\partial\mathbb R^{n+1}_+
\end{array}
\right.
\end{equation}
with $V(0)=1$. After a suitable rotation and rescaling, it becomes
\begin{equation}
\left \{
\begin{array}{rll}
- div(y^{1-\alpha}\nabla V) =& 0 \quad \quad &\mbox{in} \ \mathbb R^{n+1}_+,\medskip \\
\medskip
{V}\geq &0 \quad \quad &\mbox{in} \ \mathbb R^{n+1}_+,
\medskip \\
\frac{\partial {V}}{\partial \nu^{\alpha}}=& z_1 {V}^p \quad \quad
&\mbox{on} \
\partial\mathbb R^{n+1}_+.
\label{usef}
\end{array}
\right.
\end{equation}
Furthermore, $V(0)=1$. However, we know that the solution for
(\ref{usef}) is trivial from Theorem \ref{th1}. A contradiction is
arrived. \medskip\\
\indent\emph{Case b:} ${\delta_j}/{M_j^{\frac{1-p}{1+\alpha}}}\to
\infty$ for a
subsequence.
\medskip
\\
We select
$\lambda_j=\delta_j^{\frac{-1}{\alpha}}M_j^{\frac{1-p}{\alpha}}$.
Then
$$\lambda_j^{1+\alpha}M_j^{p-1}=\delta_j^{\frac{-(1+\alpha)}{\alpha}}M_j^{\frac{1-p}{\alpha}}\to
0$$ and
$$O(\lambda_j^2|z|^2+\delta_j^2)\lambda_j^\alpha M_j^{p-1}\to 0 $$
for fixed $z$ as $j\to\infty$. By regularity estimates, $V_j\to V$ and $V$
satisfies
\begin{equation}
\left \{
\begin{array}{rll}
- div(y^{1-\alpha}\nabla V) =& 0 \quad \quad &\mbox{in} \ \mathbb R^{n+1}_+,\medskip \\
\medskip
{V}\geq &0 \quad \quad &\mbox{in} \ \mathbb R^{n+1}_+,
\medskip \\
\frac{\partial {V}}{\partial \nu^{\alpha}}=&\pm |\nabla a(x^0)|
{V}^p \quad \quad &\mbox{on} \
\partial\mathbb R^{n+1}_+.
\end{array}
\right.
\end{equation}
Performing a rescaling, we know there exists only trivial solution, which
contradicts the fact the $V(0)=1$.

\indent\emph{Case c:} There exists some constant $\tilde \delta$ such
that ${\delta_j}/{M_j^{\frac{1-p}{1+\alpha}}}\to \tilde \delta$
for a
subsequence. \\

Let again $\lambda_j=M_j^{\frac{1-p}{1+\alpha}}$. Then  we have
$$O(\lambda_j^2|z|^2+\delta_j^2)\lambda_j^\alpha M_j^{p-1}\to 0 $$
for fixed $z$ as $j\to\infty$. By elliptic estimates, $V_j\to V$ and $V$ is the solution of
\begin{equation}
\left \{
\begin{array}{rll}
- div(y^{1-\alpha}\nabla V) =& 0 \quad \quad &\mbox{in} \ \mathbb R^{n+1}_+, \medskip\\
\medskip
{V}\geq &0 \quad \quad &\mbox{in} \ \mathbb R^{n+1}_+,
\medskip \\
\frac{\partial {V}}{\partial \nu^{\alpha}}=&(\pm |\nabla
a(x^0)|\tilde{\delta}+a(x^0)\cdot y){V}^p \quad \quad &\mbox{on} \
\partial\mathbb R^{n+1}_+.
\end{array}
\right.
\end{equation}
After a suitable rescaling, rotation and translation, it again
becomes
\begin{equation}
\left \{
\begin{array}{rll}
- div(y^{1-\alpha}\nabla V) =& 0 \quad \quad &\mbox{in} \ \mathbb R^{n+1}_+, \medskip\\
\medskip
{V}\geq &0 \quad \quad &\mbox{in} \ \mathbb R^{n+1}_+,
\medskip \\
\frac{\partial {V}}{\partial \nu^{\alpha}}=& z_1 {V}^p \quad \quad
&\mbox{on} \
\partial\mathbb R^{n+1}_+
\label{use}
\end{array}
\right.
\end{equation}
with $V(0)=1$. Clearly it is a contradiction from Theorem \ref{th1} again.
In conclusion, we obtain the a priori bound of solutions.

\end{proof}

\section{Proof of Theorem \ref{th3}}
In this section, we give the proof of Theorem \ref{th3}. We first
consider the nonexistence of solutions in the supercritical case,
i.e. $p>\frac{n+\alpha}{n-\alpha}$. For the subcritical and critical
cases, i.e. $1<p\leq \frac{n+\alpha}{n-\alpha}$, we consider the
solutions in a suitable higher dimension and reduce it to the
supercritical case. The idea is inspired by the work of \cite{LZ}.

\begin{proof}[Proof of Theorem \ref{th3}]

The proof of the theorem is divided in two cases. We shall show the nonexistence of solutions in both cases.\\

Case 1 (Supcritical case) : $p>\frac{n+\alpha}{n-\alpha}.$\\

Since no decay for the solution $w(X)$ is imposed at infinity, we
introduce the Kelvin transform, that is,
$$ \tilde w(X)=\frac{1}{|X|^{n-\alpha}}w(\frac{X}{|X|^2}).$$

Then $\tilde w$ satisfies
\begin{equation}
\left \{
\begin{array}{rll}
- div(y^{1-\alpha} \nabla \tilde w) =& 0 \quad \quad &\mbox{in} \
\mathbb
R^{n+1}_+, \medskip \\
\medskip
\frac{ \partial \tilde w}{\partial \nu^{\alpha} }=& -|X|^{\tilde
\beta} \tilde{w}^p(x,0) \quad \quad &\mbox{on} \
\partial\mathbb R^{n+1}_+\backslash\{0\},
\label{doo}
\end{array}
\right.
\end{equation}
where $\tilde \beta=p(n-\alpha)-(n+\alpha).$

Because of the Kelvin transform, the origin is the
singular point. We first prove a technical lemma to take care of the
origin.

\begin{lemma}
Assume that $\tilde w(X)$ satisfies (\ref{doo}). For all
$0<\epsilon<\min\{1, \ \frac{2\alpha}{\alpha+1}\min_{\partial\mathbb
B^+_1\cap\partial \mathbb B_1} \tilde w\}$, we have $\tilde w(X)\geq
\epsilon/2 $ for every $X\in \bar{ \mathbb B}^+_1 \backslash\{0\}$.
\label{tec2}
\end{lemma}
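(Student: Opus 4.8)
\textbf{Proof proposal for Lemma \ref{tec2}.}

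The plan is to use a barrier/comparison argument near the origin. Since $\tilde w$ is a nonnegative solution of the degenerate elliptic equation $L_\alpha \tilde w = 0$ in $\mathbb{B}^+_1$ with the (nonpositive) Neumann-type flux $\frac{\partial \tilde w}{\partial \nu^\alpha} = -|X|^{\tilde\beta}\tilde w^p$ on the flat part, $\tilde w$ is $L_\alpha$-harmonic in the interior and is a \emph{supersolution} for the homogeneous problem with zero Neumann data (the flux pushes it down, not up). First I would record that by the strong maximum principle of \cite{CaS}, either $\tilde w \equiv 0$ or $\tilde w > 0$ in $\mathbb{B}^+_1$; in the trivial case there is nothing to prove, so assume $\tilde w > 0$. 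The aim is then to bound $\tilde w$ from below by $\epsilon/2$ on the punctured half-ball, using the prescribed smallness of $\epsilon$ relative to $\min_{\partial\mathbb{B}^+_1\cap\partial\mathbb{B}_1}\tilde w$.

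The key step is to construct an explicit subsolution $\varphi$ vanishing at the right rate so that it can be placed underneath $\tilde w$ and used with the maximum principle on the annular region $\mathbb{B}^+_1\setminus \overline{\mathbb{B}^+_r}$, then let $r \to 0$. A natural candidate, consistent with the auxiliary functions used earlier in the paper (e.g. the function $h$ in Lemma \ref{tech} and the $|X|^{\alpha-n}$ weights), is something of the form $\varphi(X) = \epsilon/2 - c(1 - |X|^{?})\cdots$ or, more precisely, a radial function built from $|X|^{\alpha-n}$ (which is $L_\alpha$-harmonic away from the origin) plus a term like $y^\alpha$ to control the flux; one checks $L_\alpha \varphi \ge 0$ in the interior and $\frac{\partial\varphi}{\partial\nu^\alpha} \le 0$ (or $\le -|X|^{\tilde\beta}\varphi^p$) on the flat boundary, so that $\tilde w - \varphi$ is a supersolution. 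On $\partial\mathbb{B}^+_1\cap\partial\mathbb{B}_1$ we have $\tilde w \ge \min \tilde w$, and the explicit constraint $\epsilon < \frac{2\alpha}{\alpha+1}\min \tilde w$ is exactly what is needed to guarantee $\varphi \le \tilde w$ there (and the coefficient $\frac{2\alpha}{\alpha+1}$ should pop out of the ratio between the $|X|^{\alpha-n}$-part and the $y^\alpha$-part of $\varphi$ evaluated on the sphere). Near the origin, $\varphi$ is chosen to blow up (via the $|X|^{\alpha-n}$ factor, since $n > \alpha$) or at least stay below any positive value, so no boundary condition at $0$ is needed; one then applies the maximum principle on $\mathbb{B}^+_1\setminus\overline{\mathbb{B}^+_r}$ and sends $r\to 0$ to conclude $\tilde w \ge \varphi \ge \epsilon/2$ on $\overline{\mathbb{B}^+_1}\setminus\{0\}$.

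I expect the main obstacle to be pinning down the precise form of the barrier $\varphi$ so that all three sign conditions hold simultaneously: $L_\alpha\varphi\ge 0$ in $\mathbb{B}^+_1$, the correct inequality for $\frac{\partial\varphi}{\partial\nu^\alpha}$ on the flat face (this must absorb the nonlinear term $-|X|^{\tilde\beta}\tilde w^p$, which is where the exponent $\tilde\beta = p(n-\alpha)-(n+\alpha) > 0$ in the supercritical case helps, since $|X|^{\tilde\beta}\to 0$ at the origin), and $\varphi \le \tilde w$ on the spherical cap. Getting the constant $\frac{2\alpha}{\alpha+1}$ to come out exactly will require the careful bookkeeping typical of the earlier lemmas, in particular using $-\lim_{s\to 0^+}s^{1-\alpha}\psi'(s) = k_\alpha$ and the relation $k_\alpha = \alpha c_1(\alpha)$ for whatever $y$-dependent piece enters $\varphi$. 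Once the barrier is in hand, the comparison and the limit $r\to 0$ are routine.
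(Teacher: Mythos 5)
Your approach is essentially the paper's: the paper uses the barrier
\[
\psi_1(X)=\frac{\epsilon}{2}-\frac{\alpha+1}{2\alpha}\,\frac{r^{n-\alpha}\epsilon}{|X|^{n-\alpha}}+\frac{\epsilon\,y^\alpha}{2\alpha}
\]
on the punctured annulus $\mathbb B_1^+\setminus \mathbb B_r^+$, sets $A_1=\tilde w-\psi_1$, applies the maximum principle, and sends $r\to 0$, exactly as you propose, and both pieces $|X|^{\alpha-n}$ and $y^\alpha$ are $L_\alpha$-harmonic so $L_\alpha A_1=0$. Two places where your sketch doesn't quite match the paper's actual argument are worth noting. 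First, the flux of the barrier on the flat face is the \emph{constant} $\partial\psi_1/\partial\nu^\alpha=-\epsilon/2$, so $\partial A_1/\partial\nu^\alpha=-|X|^{\tilde\beta}\tilde w^p+\epsilon/2$ has \emph{no uniform sign}; the contradiction is not obtained by ordering $\psi_1$ below $\tilde w$ via a boundary sign condition, but by assuming a negative minimum of $A_1$ at some $\bar X$ on $\{y=0\}$, using the Hopf-type inequality $\partial A_1/\partial\nu^\alpha(\bar X)\le 0$ together with $\tilde\beta>0$ and $|\bar X|<1$ to force $\tilde w^p(\bar X)\ge \epsilon/2$, and then using $\epsilon<1$ and $p>1$ to conclude $\tilde w(\bar X)>\epsilon/2>\psi_1(\bar X)$, a contradiction. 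Second, the factor $\tfrac{2\alpha}{\alpha+1}$ has nothing to do with the Bessel constants $k_\alpha$ or $c_1(\alpha)$; it is elementary bookkeeping from evaluating the three terms of $\psi_1$ on $|X|=1$ with $y\le 1$, namely $\tfrac{\epsilon}{2}+\tfrac{\epsilon}{2\alpha}=\tfrac{(\alpha+1)\epsilon}{2\alpha}$, which must be $<\min_{\partial\mathbb B_1^+\cap\partial\mathbb B_1}\tilde w$.
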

\begin{proof}
For $0<r<1$, we introduce the following test function
$$
\psi_1(X)=\frac{\epsilon}{2}-\frac{\alpha+1}{2\alpha}\frac{r^{n-\alpha}\epsilon}{|X|^{n-\alpha}}+\frac{\epsilon
y^\alpha}{2\alpha} \quad \forall x\in \mathbb B_1^+\backslash\mathbb
B_r^+ .$$ Set
$$A_1(X):=\tilde{w}(X)-\psi_1(X). $$
Direct calculation shows that
\begin{equation}
\left \{
\begin{array}{lll}
- div(y^{1-\alpha} \nabla  A_1) = 0 \quad \quad &\mbox{in} \
\mathbb B_1^+\backslash\mathbb
B_r^+, \medskip \\
\medskip
\frac{ \partial \tilde w}{\partial \nu^{\alpha} }= -|X|^{\tilde
\beta}w^p(x,0)+\frac{\epsilon}{2} \quad \quad &\mbox{on} \
\partial({\bar{ \mathbb B}_1^+\backslash \bar{\mathbb B}_r^+}) \cap \partial\mathbb
R^{n+1}_+.
\end{array}
\right.
\end{equation}
We claim that
\begin{equation} A_1(X)\geq 0\quad \ \ \mbox{in}\  \bar{
\mathbb B}_1^+\backslash \bar{\mathbb B}_r^+.
\label{clai}
\end{equation}
We show this claim by contradiction. On $\partial \mathbb B_r^+\cap \partial\mathbb B_r$, we have
$$
A_1(X)=\tilde{w}-(\frac{\epsilon}{2}-\frac{(\alpha+1)\epsilon}{2\alpha}+\frac{\epsilon
y^\alpha}{2\alpha})>\tilde{w}>0.$$ On $\partial \mathbb B_1^+\cap
\partial\mathbb B_1$, it follows that
$$A_1(X)=\tilde{w}-(\frac{\epsilon}{2}-\frac{(\alpha+1)\epsilon}{2\alpha}r^{n-\alpha}+\frac{\epsilon
y^\alpha}{2\alpha})>\tilde{w}-\frac{(\alpha+1)\epsilon}{2\alpha}>0.$$
If (\ref{clai}) is not true, by maximum principle, there exists some
$\bar X=(\bar x, 0)\in \partial \mathbb R^{n+1}_+$ with $r<|\bar
x|<1$ such that
$$ A_1(\bar X)=\min_{\bar{
\mathbb B}_1^+\backslash \bar{\mathbb B}_r^+}A_1(X)<0. $$ On one
hand,
$$\frac{\partial A_1}{\partial \nu^\alpha}=-\lim\limits_{y\to 0} y^{1-\alpha}\frac{ \partial A_1}{\partial y}
=-|\bar X|^{\tilde{\beta}}\tilde{w}^p(\bar X)+\frac{\epsilon}{2}\leq 0.$$
Thus
$$\tilde{w}^p(\bar X)\geq  \frac{\epsilon}{2}    $$
since $\tilde\beta>0$. On the other hand,
\begin{equation}
\begin{array}{lll}
A_1(\bar X)&=&\tilde{w}(\bar X)-(\frac{\epsilon}{2}-\frac{(\alpha+1)\epsilon}
{2\alpha}\frac{r^{n-\alpha}}{|\bar X|^{n-\alpha}}) \nonumber \medskip \\
&>& \tilde{w}(\bar X)-\frac{\epsilon}{2} \nonumber \medskip \\
&\geq & 0. \nonumber
\end{array}
\end{equation}
It contradicts that $A_1(\bar X)<0$. Hence we verify the claim. For $X\in \mathbb B_1 \backslash \{0\}$,  it follows that
$A_1(X)\geq 0$ for $0<r<|X|$. Let $r\to 0$, we have that $\tilde{ w}\geq \frac{\epsilon}{2}$.
\end{proof}

For $\lambda<0$,
let $$\Sigma_\lambda=\{ X| x_1\leq \lambda\}$$ and
$$ \tilde {\Sigma}_\lambda=\Sigma_\lambda\backslash \{0^\lambda\}.$$
Here $0^\lambda$ is the reflection point of $0$ with respect to the plane $T_\lambda$. Let
$$\tilde{v}_\lambda(X)=\tilde{w}_\lambda(X)-\tilde{w}(X).$$ Then $\tilde{v}_\lambda$ satisfies

\begin{equation}
\left \{
\begin{array}{lll}
- div(y^{1-\alpha} \nabla \tilde v_\lambda) = 0 \quad \quad &\mbox{in} \
\mathbb
R^{n+1}_+, \medskip \\
\medskip
\frac{ \partial \tilde v}{\partial \nu^{\alpha} }\geq  -p|X_\lambda|^{\tilde
\beta}\xi^{p-1}(X)\tilde v_\lambda(x,0) \quad \quad &\mbox{on} \
\partial\mathbb R^{n+1}_+\backslash\{0\},
\label{exis}
\end{array}
\right.
\end{equation}
where $\xi$ is a positive function between $\tilde w_\lambda$ and $ \tilde w$. We apply the moving plane method for
solutions of equation (\ref{exis}). Our goal  is to show that $\tilde w$ is symmetric with respect to $x_1=0$. The
proof consists of two steps.
\medskip \\
Step 1: If $\lambda$ is sufficiently negative, then $\tilde{v}_\lambda\geq 0$ for all $x\in \tilde{\Sigma}_\lambda$.
\medskip

Suppose it fails, then $\tilde{v}_\lambda<0$ somewhere in $\tilde{\Sigma}_\lambda$. Since $\tilde w_\lambda$ and
$\tilde w$ both converge to $0$ as $|X|\to \infty$, then $\tilde v_\lambda(X)\to 0$ as $|X|\to \infty$.
it follows from Lemma
\ref{tec2} that if $\lambda$ is sufficiently negative, $\tilde v_\lambda(X)>0$ for $X$ close to $0^\lambda$,
Thus there exists some point $\bar X$ such that
$$\tilde v_\lambda(\bar X)=\min_{X\in \tilde{\Sigma}_\lambda}\tilde v_\lambda(X)<0.$$ From the maximum principle, we
know that $\bar X\in \tilde{\Sigma}_\lambda\cap \partial \mathbb R^{n+1}_+$. Furthermore,
$\frac{\partial \tilde v_\lambda}{\partial \nu^\alpha}<0$, but it contradicts the second equation in (\ref{exis}).
Thus the plane can be moved to the right from the negative infinity. We assume that the plane will reach a critical
point. Define
$$\lambda_0=\sup\{\lambda<0 | \tilde{w}_\mu\geq 0 \ \ \mbox{in} \ \tilde{\Sigma}_\lambda \ \ \mbox{for all } \ \
 -\infty<\mu<\lambda\}.  $$
 \\
Step 2: We show that $\lambda_0=0$.
\medskip

If it is not true, it follows that $\lambda_0<0$. We claim that
\begin{equation}
 \tilde v_{\lambda_0}(X)\equiv 0,
 \end{equation}
which contradicts
$$\frac{ \partial \tilde v_\lambda}{\partial \nu^{\alpha} }=-|X_\lambda|^{\tilde
\beta}\tilde {w}^p_\lambda(X)+|X|^{\tilde
\beta}w^p(X)    \quad \mbox{for} \ X\in \partial \mathbb R^{n+1}_+.$$ We also show it by contradiction. By the
maximum principle,
$\tilde v_{\lambda_0}(X)>0 $ in $\tilde{\Sigma}_{\lambda_0}\backslash T_{\lambda_0}$. we need the following lemma
to take care of the singular point.
\begin{lemma}
For $r_0<\min\{\frac{1}{2}\lambda_0, \ 1\}$, there exists some positive constant $c$ such that
$\tilde v_{\lambda_0}(X)>c $ in $\mathbb B^+_{r_0}(0^{\lambda_0})\backslash\{0^{\lambda_0}\}$.
\label{tec3}
\end{lemma}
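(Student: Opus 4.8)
The plan is to establish the lower bound by a barrier (comparison) argument in a punctured half-ball centered at the singular point $0^{\lambda_0}$, following the pattern of the barrier construction in Lemma \ref{tec2}. Two preliminary observations will be used throughout. Since $r_0$ is smaller than the distances from $0^{\lambda_0}$ to the hyperplane $T_{\lambda_0}$ and to the origin, the closed half-ball $\overline{\mathbb B^+_{r_0}(0^{\lambda_0})}$ lies strictly inside $\{x_1<\lambda_0\}$ and stays away from the origin; hence $\tilde w$ is bounded on it, say by $M$, by the regularity for $(\ref{doo})$ away from the origin, and on the half-sphere $S:=\partial\mathbb B_{r_0}(0^{\lambda_0})\cap\overline{\mathbb R^{n+1}_+}$ the function $\tilde v_{\lambda_0}$ is continuous and, since $S\subset\tilde\Sigma_{\lambda_0}\setminus T_{\lambda_0}$, strictly positive, so $\tilde v_{\lambda_0}\ge m>0$ on $S$. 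I also record that $|X^{\lambda_0}|=|X-0^{\lambda_0}|$, because reflection across $T_{\lambda_0}$ is an isometry interchanging $0$ and $0^{\lambda_0}$.

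For $0<r<r_0$ I would work in the annulus $\mathcal A_r:=\mathbb B^+_{r_0}(0^{\lambda_0})\setminus\overline{\mathbb B^+_r(0^{\lambda_0})}$ and use the barrier
$$\phi(X)=a_1-a_2\,\frac{r^{\,n-\alpha}}{|X-0^{\lambda_0}|^{\,n-\alpha}}+a_3\,y^{\alpha},$$
whose three terms are each $L_\alpha$-harmonic, so $L_\alpha\phi=0$, and which satisfies $\frac{\partial\phi}{\partial\nu^{\alpha}}=-\alpha a_3$ on the flat part (using $\frac{\partial}{\partial\nu^{\alpha}}y^{\alpha}=-\alpha$ and $\frac{\partial}{\partial\nu^{\alpha}}|X-0^{\lambda_0}|^{-(n-\alpha)}=0$ there). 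The positive constants are chosen in the order: $a_1$ small (to be pinned down), then $a_3:=2K_0a_1/\alpha$ with $K_0:=p\,r_0^{\tilde\beta}(a_1+M)^{p-1}$, then $a_2:=a_1+a_3r_0^{\alpha}$. One checks then that $\phi\le 0$ on the inner sphere $\{|X-0^{\lambda_0}|=r\}$, that $\phi\le m$ on $S$ once $a_1$ is small enough (possible since $K_0$ stays bounded as $a_1\to0$), and that $\phi$ is smooth on $\overline{\mathcal A_r}$.

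Set $A:=\tilde v_{\lambda_0}-\phi$, so that $L_\alpha A=0$ in $\mathcal A_r$ and $A\ge0$ on the two spherical pieces of $\partial\mathcal A_r$. If $A<0$ somewhere, the strong maximum principle and Hopf lemma for $L_\alpha$ from \cite{CaS} force the minimum of $A$ over $\overline{\mathcal A_r}$ to be attained at a point $\bar X$ in the relative interior of the flat part $\{r<|X-0^{\lambda_0}|<r_0,\ y=0\}$, with $\frac{\partial A}{\partial\nu^{\alpha}}(\bar X)\le0$. There $\phi(\bar X)>0$ (otherwise $A(\bar X)\ge\tilde v_{\lambda_0}(\bar X)>0$), hence $0<\tilde v_{\lambda_0}(\bar X)<\phi(\bar X)\le a_1$, which in turn gives $\tilde w_{\lambda_0}(\bar X)=\tilde v_{\lambda_0}(\bar X)+\tilde w(\bar X)\le a_1+M$; therefore the intermediate value $\xi(\bar X)$ appearing in $(\ref{exis})$ is $\le a_1+M$ and, since $\tilde\beta>0$ and $|\bar X^{\lambda_0}|<r_0$, the coefficient in $(\ref{exis})$ at $\bar X$ is $\le K_0$. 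Consequently $\frac{\partial\tilde v_{\lambda_0}}{\partial\nu^{\alpha}}(\bar X)\ge-K_0\tilde v_{\lambda_0}(\bar X)\ge-K_0a_1$, and adding $-\frac{\partial\phi}{\partial\nu^{\alpha}}(\bar X)=\alpha a_3=2K_0a_1$ yields $\frac{\partial A}{\partial\nu^{\alpha}}(\bar X)\ge K_0a_1>0$, a contradiction. Thus $A\ge0$, i.e. $\tilde v_{\lambda_0}\ge\phi$ in $\mathcal A_r$; letting $r\downarrow0$ (the constants $a_1,a_2,a_3$ not depending on $r$) gives $\tilde v_{\lambda_0}(X)\ge a_1+a_3y^{\alpha}\ge a_1$ for every $X\in\mathbb B^+_{r_0}(0^{\lambda_0})\setminus\{0^{\lambda_0}\}$, and the lemma follows with, say, $c=a_1/2$.

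The step I expect to be the main obstacle, and the reason a naive comparison fails, is the coefficient $p|X^{\lambda_0}|^{\tilde\beta}\xi^{p-1}(X)$ in $(\ref{exis})$: since $\tilde w_{\lambda_0}$ carries the Kelvin-type blow-up of $\tilde w$ at the reflected point and $\tilde\beta-(n-\alpha)(p-1)=-2\alpha<0$, this coefficient is in general unbounded as $X\to0^{\lambda_0}$, so it cannot be absorbed by a bounded barrier. The way around it, as above, is that at the candidate minimum $\bar X$ the value $\tilde v_{\lambda_0}(\bar X)$ is trapped below the small constant $a_1$, which a posteriori bounds $\tilde w_{\lambda_0}(\bar X)$ and hence $\xi(\bar X)$, so the coefficient is under control precisely where it is used. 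What remains is bookkeeping --- keeping the chain $a_1\to a_3\to a_2$ consistent and verifying the sign conditions on $\phi$ --- together with invoking the maximum principle and Hopf lemma for the degenerate operator $L_\alpha$ on a region meeting $\{y=0\}$, both available from \cite{CaS} and \cite{JLX}.
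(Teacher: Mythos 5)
Your proof is correct and follows essentially the same barrier argument as the paper's: a comparison function of the form (constant) $-$ (singular radial term) $+$ ($y^{\alpha}$ term) on the punctured half-annulus around $0^{\lambda_0}$, with the key point that at a would-be minimum of $A$ on $\{y=0\}$ the value $\tilde v_{\lambda_0}$ is trapped below the small leading constant, which a posteriori bounds $\tilde w_{\lambda_0}=\tilde v_{\lambda_0}+\tilde w$ and hence the coefficient $p\,|X^{\lambda_0}|^{\tilde\beta}\xi^{p-1}$ in \eqref{exis}. Your sequencing $a_1\mapsto K_0\mapsto a_3\mapsto a_2$ is a slightly cleaner bookkeeping than the paper's $\epsilon$--$\mu$ scheme, and you correctly carry the factor $\alpha$ in $\frac{\partial}{\partial\nu^{\alpha}}y^{\alpha}=-\alpha$, but the underlying argument is the same.
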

\begin{proof}
Since $ \tilde{v}_{\lambda_0}>0$ in $\tilde \Sigma_{\lambda_0}\cap \mathbb B^+_{r_0}(0^{\lambda_0})$, then
$\min_{\partial \mathbb B^+_{r_0}(0^{\lambda_0})}\tilde{v}_{\lambda_0}\geq \epsilon $ for some $0<\epsilon<1$.
By the continuity of $\tilde w$ in $\mathbb R^{n+1}_+\backslash\{0\}$, there exists some positive constant $c_1$ such that
\begin{equation}
 \tilde w(X)<c_1 \quad \mbox{for} \ X\in \bar{\mathbb B}^+_{r_0}(0^{\lambda_0}).
 \label{dep}
 \end{equation}
Let
$$\psi_2(X)=\frac{\epsilon \mu}{2}-\frac{r^{n-\alpha}\epsilon}{|X|^{n-\alpha}}+\frac{\epsilon (1-\mu)y^\alpha}{2}
\quad \mbox{in} \ \mathbb B^+_{r_0}(0^{\lambda_0})\backslash  \mathbb B^+_{r}(0^{\lambda_0}), $$
where the positive constant $\mu$ will be determined later. Set
$$ A_2(X)=\tilde{v}_{\lambda_0}(X)-\psi_2(X).$$
Direct calculation shows that
\begin{equation}
\left \{
\begin{array}{lll}
- div(y^{1-\alpha} \nabla  A_2) = 0 \quad \quad &\mbox{in} \
\mathbb B^+_{r_0}(0^{\lambda_0})\backslash  \mathbb B^+_{r}(0^{\lambda_0}), \medskip\\
\frac{ \partial A_2}{\partial \nu^{\alpha} }= -|X^{\lambda_0}|^{\tilde
\beta}\tilde w^p_{\lambda_0}(x,0)+|X|^{\tilde
\beta}\tilde w^p(x,0)+\frac{\epsilon(1-\mu)}{2} \quad \quad &\mbox{on} \
\partial({\bar {\mathbb B}^+_{r_0}(0^{\lambda_0})\backslash  \bar{\mathbb B}^+_{r}(0^{\lambda_0})}) \cap \partial\mathbb
R^{n+1}_+.
\label{mmm}
\end{array}
\right.
\end{equation}
We also claim that
$$ A_2(X)\geq 0  \quad \mbox{in} \ \mathbb B^+_{r_0}(0^{\lambda_0})\backslash  \mathbb B^+_{r}(0^{\lambda_0}). $$
On $\partial \mathbb B^+_{r_0}(0^{\lambda_0})\cap \partial\mathbb B_{r_0}(0^{\lambda_0})$,
$$ A_2(X)\geq \epsilon-(\frac{\epsilon \mu}{2}-\frac{r^{n-\alpha}
\epsilon}{|r_0|^{n-\alpha}}+\frac{\epsilon(1-\mu)}{2})>0.$$
On $\partial \mathbb B^+_{r}(0^{\lambda_0})\cap \partial\mathbb B_{r}(0^{\lambda_0})$,
$$ A_2(X)>\tilde w_{\lambda_0}(X)>0. $$
By the maximum principle, the minimum value of $A_2(X)$ shall occur on
$\partial({\bar {\mathbb B}^+_{r_0}(0^{\lambda_0})\backslash  \bar{\mathbb B}^+_{r}(0^{\lambda_0})}) \cap \partial\mathbb
R^{n+1}_+$. So there exists some $\bar X\in \partial({\bar {\mathbb B}^+_{r_0}(0^{\lambda_0})\backslash  \bar{\mathbb B}^+_{r}(0^{\lambda_0})}) \cap \partial\mathbb
R^{n+1}_+ $ such that
$$ A_2(\bar X)<0 \quad \mbox{and} \ \ \frac{\partial A_2}{\partial \nu^\alpha}(\bar X)\leq 0.$$
The fact that $ A_2(\bar X)<0$ implies that
$$\tilde w_{\lambda_0}(\bar X)-\tilde w(\bar X)-\psi_2(\bar X)<0.$$
Then
\begin{equation}
\tilde w_{\lambda_0}(\bar X)\leq c_2,
\label{dep1}
\end{equation}
 where $c_2$ only depends on $c_1$. Furthermore, $A_2(\bar X)<0$ implies that
 \begin{equation}
\tilde{v}_{\lambda_0}(\bar X)\leq \frac{\epsilon \mu}{2}-\frac{r^{n-\alpha}
\epsilon}{|\bar X|^{n-\alpha}}<\frac{\epsilon \mu}{2}.
\label{nee}
 \end{equation}
With the help of (\ref{dep}),(\ref{dep1}) and Mean value theorem, we obtain
\begin{equation}
\begin{array}{lll}
|\bar X^{\lambda_0}|^{\tilde
\beta}\tilde w^p_{\lambda_0}(\bar x,0)-|\bar X|^{\tilde
\beta}\tilde w^p(\bar x,0)&\leq& |\bar X^{\lambda_0}|^{\tilde
\beta}(\tilde w^p_{\lambda_0}(\bar X)-\tilde w^p(\bar X)) \nonumber \medskip \\
&\leq & c_3 \tilde v_{\lambda_0}(\bar X) \nonumber\\
\end{array}
\end{equation}
where $c_3$ depends on $\lambda_0$, $c_1$ and $c_2$. Since $\frac{\partial A_2}{\partial \nu^\alpha}\leq 0, $
by the second equation of (\ref{mmm}), we infer that
\begin{equation}
\frac{\epsilon (1-\mu)}{2 c_3}\leq \tilde{v}_{\lambda_0}(\bar X).
\label{nee3}
\end{equation}
Together with (\ref{nee}) and (\ref{nee3}), we have
$$ \frac{\epsilon (1-\mu)}{2 c_3}<\frac{\epsilon \mu}{2}.      $$
If we choose $\mu$ small enough such that $\mu<{1}/{(1+c_3)}$ at the beginning,
 we will reach a contradiction. Hence we prove the claim.
Let $r\to 0$. Hence
$$\tilde{v}_{\lambda_0}(X)>c=\frac{\epsilon \mu}{2} $$
for $\mu<{1}/{(1+c_3)}$. This completes the proof.
\end{proof}

We continue the proof of Step 2. By the definition of $\lambda_0$, there exist sequences of $\lambda_k (
\lambda_k>\lambda_0)$ and $\bar X_k$ such that $\lambda_k\to\lambda_0$ and $\tilde v_{\lambda_k}(\bar X_k)
=\inf_{\tilde \Sigma_{\lambda_k}}\tilde v_{\lambda_k}(X)<0$. By Lemma \ref{tec3} and continuity of $
\tilde v_{\lambda_k}$, we have
$$ \tilde v_{\lambda_k}(X)>\frac{c}{2} \quad \forall X\in  \mathbb B^+_{r_0}(0^{\lambda_0})\backslash\{0^{\lambda_0}\} $$
for $k$ large enough, since $\tilde v_{\lambda_k}(X)\to 0$ as $|X|\to \infty$. By the maximum principle,
$\bar X_k\in (\tilde \Sigma_{\lambda_k}\backslash \mathbb B^+_{r_0}(0^{\lambda_0}))\cap \partial \mathbb R^{n+1}_+$. The same
argument as Step 1 gives the contradiction. Therefore, it is confirmed that $\lambda_0=0$.

It is clear that $w(x, y)$ is symmetric with respect to $x_1=0$. Since the equation is invariant under rotation,
we conclude that $w(x,y)$ is radially symmetric with respect to the point $(0, y)$ for every fixed $y$.
Thanks to the Kelvin transform, we can choose the origin arbitrarily on the plane $y=0$. Thus, $w(x, y)$ only
depends on the variable $y$. The degenerate partial differential equation (\ref{new}) turns out to be ordinary differential equation, that is,
\begin{equation}
\left\{
\begin{array}{lll}
w_{yy}+\frac{1-\alpha}{y}w_y&=&0 \quad \forall \ y>0, \nonumber \medskip \\
-\lim\limits_{y\to 0^+}y^{1-\alpha} w_y&=&w^p(0).
 \nonumber
\end{array}
\right.
\end{equation}
Solving the ordinary differential equation gives that
$$ w=\frac{a}{\alpha}y^\alpha+b$$
where $b>0$ and $ a=b^p$.
\medskip \\
Case 2 (Subcritical and critical cases) : $1<p\leq \frac{n+\alpha}{n-\alpha}.$\\

We expand the dimension of the space and reduce the problem into supercritical case. We
choose a large integer $m$ such that
$$p>\frac{n+m+\alpha}{n+m-\alpha}.$$
Let
$$ w^0(x_1,\cdots, x_n, x_{n+1},\cdots, x_{n+m}, y)=w(x_1,\cdots, x_n, y).$$
Then $w^0$ satisfies

\begin{equation}
\left \{
\begin{array}{rll}
- div(y^{1-\alpha} \nabla w^0) =& 0 \quad \quad &\mbox{in} \ \mathbb
R^{n+m+1}_+, \medskip\\
\medskip
\lim\limits_{y\to 0^+} y^{1-\alpha}\frac{ \partial w^0}{\partial y}(x,
y)=& (w^0)^p \quad \quad &\mbox{on} \
\partial\mathbb R^{n+m+1}_+.
\label{expa}
\end{array}
\right.
\end{equation}
Observe that $p$ is supercritical in the equation (\ref{expa}). Applying the
same argument as Case 1, we deduce that $w^0$ is independent of $x_i$ for $i=1, \cdots, n+m$. Thus,
$w$ is independent of $x_1, \cdots, x_n$. Again $w$ only depends on $y$. Therefore, $w$ satisfies
the same conclusion as Case 1. This completes the proof of Theorem \ref{th3}.

\end{proof}

\end{document}